\documentclass[11pt,reqno]{amsart}
\usepackage[margin=1.15in]{geometry}
\usepackage{amsmath,amssymb,amsthm}
\usepackage{booktabs}
\usepackage{tikz}
\usepackage[colorlinks,linkcolor=blue!55!black,citecolor=blue!55!black,urlcolor=blue!55!black]{hyperref}

\definecolor{codec}{RGB}{31,94,168}
\definecolor{newc}{RGB}{214,108,12}
\definecolor{delc}{RGB}{190,30,30}
\colorlet{gridc}{gray!40}
\colorlet{vtx}{gray!65}
\colorlet{seam}{black!65}
\definecolor{pc1}{RGB}{198,219,239}
\definecolor{pc2}{RGB}{199,233,192}
\definecolor{pc3}{RGB}{253,208,162}
\definecolor{pc4}{RGB}{218,218,235}
\definecolor{pc5}{RGB}{252,187,161}

\newtheorem{theorem}{Theorem}
\newtheorem{lemma}[theorem]{Lemma}
\newtheorem{proposition}[theorem]{Proposition}
\newtheorem{conjecture}[theorem]{Conjecture}
\newtheorem{corollary}[theorem]{Corollary}
\theoremstyle{definition}
\newtheorem{example}[theorem]{Example}
\theoremstyle{remark}
\newtheorem{remark}[theorem]{Remark}

\newcommand{\Z}{\mathbb{Z}}

\title[An improved lower bound for $\rho(F_2(C_n))$]{An improved lower bound for the packing number\\ of the 2-token graph of the cycle}

\author{Luis Manuel Rivera}
\address{Unidad Acad\'emica de Matem\'aticas, Universidad Aut\'onoma de Zacatecas, Zacatecas, M\'exico}
\email{luismanuel.rivera@gmail.com}

\keywords{Token graph; packing number; cycle}
\subjclass[2020]{05C69, 05C76, 94B25}

\begin{document}

\begin{abstract}\sloppy
Let $F_2(C_n)$ be the $2$-token graph of the cycle $C_n$ and let $\rho$ denote the packing number. Gómez Soto and Ríos-Castro recently proved that $\rho(F_2(C_n))\ge a(n)$ for $n\ge 19$, where $a(n)$ is an explicit expression. In this note, we prove that
\[
\rho(F_2(C_n))\ \ge\ \left\lfloor\frac{n(n-2)}{10}\right\rfloor+1\qquad\text{for every } n\ge 3,
\]
which improves $a(n)$ by one whenever $n\equiv 0,2\pmod{10}$. 
\end{abstract}

\maketitle

\section{Introduction}

For a simple graph $G$ and an integer $k\ge 1$, the \emph{$k$-token graph} $F_k(G)$ is the graph whose vertices are all the $k$-subsets of $V(G)$, where two vertices are adjacent whenever their symmetric difference is an edge of $G$ (see, e.g.,~\cite{FFHHUW}). There is an equivalent, more intuitive way to work with token graphs: a vertex of $F_k(G)$ is a configuration of $k$ indistinguishable tokens on distinct vertices of $G$, and two configurations are adjacent when one is obtained from the other by moving one token to an adjacent unoccupied vertex. For $k=2$, these graphs are also known as \emph{double vertex graphs}~\cite{ABEL}. There is growing interest in the study of token graphs; see e.g.,~\cite{ATLR, DFM, RFSGGL, GLRR}. A set $P\subseteq V(G)$ is a \emph{packing set} if $d_G(x,y)\ge 3$ for all distinct $x,y\in P$, and the \emph{packing number} $\rho(G)$ is the maximum size of a packing set.

A constant-weight-$2$ code correcting a single adjacent transposition is exactly a packing set of $F_2(P_n)$; this problem goes back to Sloane~\cite{S}, and $\rho(F_2(P_n))$ was determined in~\cite{GLRR}, confirming a conjecture recorded in the OEIS~\cite{OEIS}. If the first and the last bit are also regarded as adjacent, one is led to $F_2(C_n)$. Gómez Soto and R\'ios-Castro~\cite{GR} obtained the first general lower bound for its packing number.

\begin{theorem}\label{theoremGR}
For every $n\ge 19$, $\rho(F_2(C_n))\ge a(n)$, where
\[
a(n)=\left\lceil \frac{\binom{n-1}{2}-n+1-2\lfloor\frac{n-1}{2}\rfloor}{5}\right\rceil+\left\lfloor\frac{n-1}{2}\right\rfloor .
\]
\end{theorem}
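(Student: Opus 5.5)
The statement is the Gómez Soto–Ríos-Castro bound quoted from \cite{GR}. I would prove it with an explicit packing built from a linear code. Identify the vertices of $C_n$ with $\Z_n$ and encode a vertex $\{i,j\}$ of $F_2(C_n)$ by a ``gap'' coordinate.

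\textbf{Step 1: translate the distance condition.} Two $2$-sets are at distance at most $2$ in $F_2(C_n)$ exactly when one is obtained from the other by at most two token moves to neighbouring cycle vertices. So a set $P$ is a packing iff no two of its elements are joined by such a move sequence. I would split the vertices by the cyclic distance $d=\min(j-i,\,n-(j-i))\in\{1,\dots,\lfloor n/2\rfloor\}$ between the tokens.
\begin{itemize}
\item Moving one token changes $d$ by $\pm1$ and shifts the ``base point'' by $0$ or $1$.
\item Moving both tokens in the same direction preserves $d$ and shifts the base point by $\pm1$.
\end{itemize}

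\textbf{Step 2: the construction.} Write a vertex as $(i,d)$, meaning the set $\{i,i+d\}$, with $1\le d\le \lfloor (n-1)/2\rfloor$ (plus the antipodal class if $n$ is even). Choose the vertices satisfying a linear congruence such as $i+2d\equiv c \pmod 5$, in the spirit of the perfect Lee-code packing of $\Z^2$ by $x+2y\equiv 0\pmod 5$.
\begin{itemize}
\item In the interior of the strip of $(i,d)$-coordinates, the token moves act like Lee-metric unit steps. The mod $5$ condition then forces every two chosen points to be at distance at least $3$.
\item The interior has $\binom{n-1}{2}-n+1-2\lfloor (n-1)/2\rfloor$ vertices, after discarding the two boundary rows $d=1$ and $d$ maximal (where wraparound and reflection spoil the lattice structure). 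Taking the residue class mod $5$ with the most members gives at least the ceiling term $\lceil\cdot/5\rceil$.
\end{itemize}

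\textbf{Step 3: the boundary.} I would add about $\lfloor (n-1)/2\rfloor$ further vertices taken from the discarded rows, for instance every second vertex of the row $d=1$ (sets of adjacent tokens). I would need to check that these are pairwise at distance at least $3$, and at distance at least $3$ from the chosen interior points. The $d=1$ row is itself a cycle of length $n$ under rotation, so spacing by $2$ plus a compatible phase should work. To keep it compatible, I would pick the interior residue class to avoid row $d=2$ near the chosen boundary points, shrinking the interior set if necessary; the ceiling is robust to losing a constant.

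\textbf{Main obstacle.} The hard step is the compatibility at the wraparound in $i$ modulo $n$ (when $5\nmid n$) and at the reflection row $d\approx n/2$. The hypothesis $n\ge 19$ is presumably what makes the counting slack sufficient there. I would handle it by deleting a bounded strip of columns near the seam and verifying, by a case analysis on $n\bmod 10$, that the deleted points are exactly absorbed by the choice of residue class.
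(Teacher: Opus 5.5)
Your plan has a genuine gap: the vertex count does not close, and there is no slack that would let it close loosely. By Proposition~\ref{prop:a}, $a(n)=\lceil n(n-2)/10\rceil=\frac15\binom n2-\frac n{10}+O(1)$. The computations reported in the introduction give $\rho(F_2(C_n))\le a(n)+1$ for $12\le n\le 32$. So, relative to density $1/5$, a construction may lose only about $n/10$ vertices in total. The hypothesis $n\ge 19$ buys nothing, since the bound holds for all $n\ge 3$.

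Your steps each lose a linear amount. In Step~3, the vertex $\{i,i+1\}$ is within distance $2$ of $\{i-1,i+1\}$, $\{i,i+2\}$, $\{i-2,i+1\}$, $\{i-1,i+2\}$ and $\{i,i+3\}$. Taking every second vertex of the row $d=1$ therefore forces the rows $d=2,3$ to be empty up to $O(1)$ vertices. You trade the roughly $3n/5$ code vertices of rows $1,2,3$ for roughly $n/2$, a net loss of $n/10$, not of a constant. Discarding the top row, where $g$ and $n-g$ are identified and the code meets its own mirror image, costs another $n/5$ for odd $n$ and $n/10$ for even $n$. When $5\nmid n$, each column deleted at the seam costs about $n/10$ more, since a column has about $n/2$ vertices. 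The first two losses alone exceed the budget, even when $5\mid n$.

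For $n=19$: nine vertices in row $1$ leave at most one vertex of rows $2,3$ available, and the code $i+2d\equiv c$ has exactly $19$ vertices in rows $4,\dots,8$. You get at most $29<33=a(19)$. The count in Step~2 is also not that of your region: for $n=19$ the rows $2\le d\le 8$ contain $133$ vertices, not $\binom{18}{2}-18-18=117$. By Proposition~\ref{prop:a}, the term $\lfloor (n-1)/2\rfloor$ in $a(n)$ is not a boundary bonus to be harvested; the whole expression is just $\lceil n(n-2)/10\rceil$.

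The missing idea is not to fold. Record a vertex by the sum $s$ and the gap $g=v-u\in\{1,\dots,n-1\}$ of token positions in $\Z$, rather than by the cyclic distance $d\le n/2$. Then $F_2(C_n)$ is the strip $\Sigma$ modulo a single glide reflection $\sigma$ (Lemma~\ref{lem:model}). Its fundamental domain $D$ has $n$ columns and no fold line. The paper's $S_\varepsilon$ is the code $L_{2n+2\varepsilon}$ on $n-1$ consecutive columns of $D$, with the boundary rows $g=1$ and $g=n-1$ kept as they are; a packing needs no special treatment at the boundary. Across the seam the only close pairs have $s(\sigma y)-s(x)=2$, and one congruence gives $|g(x)-g(\sigma y)|\ge 4$ (Lemma~\ref{lem:packing}). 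Counting gives $\max\{|S_0|,|S_1|\}=\lceil n(n-2)/10\rceil=a(n)$ for every $n\ge 3$ (Lemma~\ref{lem:count}, Corollary~\ref{cor:count}, Proposition~\ref{prop:a}). The one omitted column costs about the $n/10$ the bound allows.

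A smaller point: in your $(i,d)$ coordinates the moves are $(0,\pm1)$ and $(\pm1,\mp1)$, not Lee unit steps. The congruence $i+2d\equiv c$ still gives distance at least $3$ in the interior, but only because $i+2d=2v-u$ in the token coordinates $(u,v)$, where the moves are Lee steps.
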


\subsection{Main results}

The main result in this note is the following:

\begin{theorem}\label{maintheorem}
For every $n\ge 3$,
\[
\rho(F_2(C_n))\ \ge\ \left\lfloor\frac{n(n-2)}{10}\right\rfloor+1 .
\]
\end{theorem}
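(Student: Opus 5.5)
The plan is a direct construction: exhibit an explicit packing of the required size, with small $n$ checked by hand.

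\textbf{Coordinates.} I would encode a vertex $\{i,j\}$ of $F_2(C_n)$ (indices mod $n$) as a lattice point. Write it as $(i,j)\in\Z^2$ with $1\le j-i\le n-1$. A token move changes exactly one coordinate by $\pm1$. So $F_2(C_n)$ is the quotient of the diagonal strip $S=\{(i,j):1\le j-i\le n-1\}$ of the square grid by the identifications $(i,j)\sim(i+n,j+n)$ and $(i,j)\sim(j,i+n)$. Its graph distance is at most the induced grid ($\ell_1$) distance, which is all the packing condition needs. The tokens can never cross, so the boundary lines $j-i=0$ and $j-i=n$ are simply absent. Interior vertices therefore have closed neighbourhoods of size $5$, and the $n$ vertices $\{i,i+1\}$ have closed neighbourhoods of size $3$. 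This already explains the shape $n(n-2)/10$: it is roughly (number of vertices minus the boundary effect) divided by $5$.

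\textbf{Construction.} I would use the perfect Lee code of radius $1$ in $\Z^2$, namely $L_c=\{(x,y): x+2y\equiv c \pmod 5\}$, any two of whose points are at $\ell_1$-distance at least $3$.
\begin{enumerate}
\item Take as candidates the points of $L_c$ in the fundamental domain $\{0\le i\le n-1,\ i<j<i+n\}$, up to the flip. Equivalently, use pairs with gap $d=j-i\le n/2$, choosing the representative with the smaller gap.
\item Delete codewords that conflict across the two seams. One seam is the translation $i\mapsto i+n$, which preserves $L_c$ only when $5\mid n$. The other is the flip line $d=n/2$ (or the middle pair of gap values when $n$ is odd), where $(i,j)\mapsto(j,i+n)$ sends $L_c$ to a different coset.
\item Near the boundary row $d=1$, add back codewords in the positions the Lee pattern wastes. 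Since these vertices have only $3$ neighbours, some extra codewords fit there.
\end{enumerate}
The free parameter $c$, and possibly a shear of the pattern, are chosen depending on $n \bmod 10$ to minimise deletions.

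\textbf{Counting.} The count splits into cases $n\equiv 0,\dots,9 \pmod{10}$. In each case I would count the Lee points in the strip: about $\tfrac15$ of the $\binom n2$ vertices. I would then subtract the seam deletions, which are $O(n)$ with an explicit constant, and add the boundary gains. The target is to show the total is at least $\lfloor n(n-2)/10\rfloor+1$. Note that $\binom n2-n/2=n(n-2)/2$, so the main term matches if the seam losses are exactly offset by the gains along $d=1$ plus one extra codeword. The statement says that extra $+1$ is precisely the improvement over Theorem~\ref{theoremGR} for $n\equiv0,2\pmod{10}$. Small values of $n$ (say $3\le n\le 12$) would be handled by listing explicit packings.

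\textbf{Main obstacle.} The hard part is the seam analysis. One needs the pattern to glue consistently across both the translation seam and the flip seam while losing at most a bounded number of codewords per row. The residue classes $n\equiv0,2\pmod{10}$ are the tightest, and there I expect to need a hand-crafted local modification near the seam: shifting a short run of codewords by one position so that one additional codeword fits. I would first try to find this modification on small instances ($n=10,12,20$) and then check that it repeats periodically.
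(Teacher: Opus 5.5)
Your proposal is a plan rather than a proof. The packing, the deletions along the seams, the codewords added near $d=1$, the choice of $c$ and the final count are never specified. The two points the theorem hinges on are left to experiment: that the net loss against $\frac15\binom n2$ is only about $n/10$, and that one extra vertex exists for $n\equiv 0,2\pmod{10}$.

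Worse, the plan as written falls short, for three reasons.

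\emph{Your step~3 has nothing to work with.} Away from the seams, $L_c$ restricted to the band is already a maximal packing. If $(i,i+1)\notin L_c$ is covered by a codeword on the line $d=0$, then its neighbour $(i-1,i+1)$ is covered by a codeword with $1\le d\le 3$, and that codeword is at distance at most $2$ from $(i,i+1)$. So any gain at the boundary requires a rearrangement, and the rearrangement in the paper gains exactly one vertex in total.

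\emph{Your explanation of the shape $n(n-2)/10$ is off.} $L_c$ already has about $\frac15\binom n2=\frac{n(n-1)}{10}$ points in the band. The deficit of $n/10$ in the target is therefore the entire budget for the seams, since no $L_c$ is invariant under $(i,j)\mapsto(j,i+n)$. It is not a boundary effect to be repaid along $d=1$.

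\emph{Cutting along the core $d=n/2$ is too expensive.} There the flip turns $L_c$ into the mirror code $\{2i+j\equiv c+n\pmod 5\}$, which is not a coset of $L_c$, and codewords collide across the fold. For $n=10,15$ every $c$ is equivalent to $c=0$ by rotating $C_n$, and the translation seam is harmless.
\begin{itemize}
\item For $n=15$, your $21$ candidates contain the three disjoint colliding pairs $\{2,9\},\{2,11\}$; $\{7,14\},\{1,7\}$; $\{4,12\},\{6,12\}$. At most $18$ survive, against the target $20$.
\item For $n=10$, the $9$ candidates contain the colliding pair $\{4,8\},\{3,9\}$, leaving $8$ against the target $9$.
\item When $5\nmid n$, the translation cut costs roughly another $n/10$ on top of the fold.
\end{itemize}

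The missing idea is the choice of fundamental domain. The glide reflection $(i,j)\mapsto(j,i+n)$ itself has a fundamental domain transversal to the band, namely $0\le i+j\le n-1$. This gives a single seam crossing the band and no seam along the core.

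The paper takes your $L_c$ with $c\equiv n+\varepsilon\pmod 5$ on the $n-1$ anti-diagonals $\varepsilon\le i+j\le n-2+\varepsilon$. It discards one anti-diagonal, which holds about $n/10$ codewords, exactly the budget. In the coordinates $s=i+j$, $g=j-i$, the only pairs that can come within distance $2$ across the seam lie in the first and last columns. Their $g$-offset is $\equiv 6-2\varepsilon\pmod{10}$, hence at least $4$.

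An exact count gives $\max_\varepsilon|S_\varepsilon|=\lceil n(n-2)/10\rceil$, which proves the theorem for $n\not\equiv 0,2\pmod{10}$. For $n\equiv 0,2\pmod{10}$, the codewords of $S_0$ in rows $g=1,2,3$ are replaced by an explicit set $B$ in rows $g=1,2$ with one more point. One then checks directly, including across the seam, that the result is still a packing.
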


To compare Theorem~\ref{maintheorem} with Theorem~\ref{theoremGR}, we simplify $a(n)$ using the following elementary identity.

\begin{proposition}\label{prop:a}
For every $n\ge 2$, $\ a(n)=\lceil n(n-2)/10\rceil$.
\end{proposition}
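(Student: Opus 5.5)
The plan is a direct algebraic simplification followed by a check modulo $5$ or $10$. Put $m=\lfloor (n-1)/2\rfloor$. Since $m$ is an integer, we can absorb it into the ceiling:
\[
a(n)=\left\lceil \frac{N+5m}{5}\right\rceil,\qquad N=\binom{n-1}{2}-(n-1)-2m=\frac{(n-1)(n-4)}{2}-2m .
\]
This gives $a(n)=\lceil ((n-1)(n-4)/2+3m)/5\rceil$. We then split on the parity of $n$. In each case we compare with $x:=n(n-2)/2$, so that the target is $\lceil x/5\rceil=\lceil n(n-2)/10\rceil$.

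\emph{Case $n$ even.} Here $m=(n-2)/2$. The numerator is
\[
\frac{(n-1)(n-4)+3(n-2)}{2}=\frac{n^2-2n-2}{2}=x-1 .
\]
So it suffices to show $\lceil (x-1)/5\rceil=\lceil x/5\rceil$. For an integer $x$, this fails exactly when $x\equiv 1\pmod 5$. Writing $n=2k$ gives $x=2k(k-1)$. Running through $k\bmod 5\in\{0,1,2,3,4\}$ gives $x\equiv 0,0,4,2,4\pmod 5$, never $1$. This settles the even case.

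\emph{Case $n$ odd.} Here $m=(n-1)/2$. The numerator becomes $(n-1)(n-4+3)/2=(n-1)^2/2$, while $x=((n-1)^2-1)/2$. So we must show
\[
\left\lceil \frac{(n-1)^2}{10}\right\rceil=\left\lceil \frac{(n-1)^2-1}{10}\right\rceil .
\]
By the same principle, this holds unless $(n-1)^2\equiv 1\pmod{10}$. That cannot happen, because $n-1$ is even and so $(n-1)^2$ is even. For $n\ge 2$ all quantities involved are well defined, and this completes the argument.

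The only step needing care is the residue check in the even case: the identity $\lceil (x-1)/5\rceil=\lceil x/5\rceil$ is genuinely a statement about quadratic residues. Since the check is a finite table over $k\bmod 5$, I expect no real obstacle beyond bookkeeping.
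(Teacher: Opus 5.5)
Your proof is correct and follows essentially the same route as the paper: absorb the integer $\lfloor (n-1)/2\rfloor$ into the ceiling, split by parity, and check a residue condition showing that the numerator never crosses a multiple of $5$ (respectively $10$). The only differences are in bookkeeping. You check $2k(k-1)\bmod 5$ and the parity of $(n-1)^2$, whereas the paper lists the residues of $m^2$ and $m^2-1$ modulo $10$ with $m=n-1$.
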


\begin{proof}
Put $m=n-1$ and $h=\lfloor m/2\rfloor$. Since $\binom{m}{2}-m=m(m-3)/2$ and $h\in\Z$, we have $a(n)=\bigl\lceil \bigl(m(m-3)/2+3h\bigr)/5\bigr\rceil$, that is, $a(n)=\lceil m^2/10\rceil$ if $m$ is even and $a(n)=\lceil (m^2-3)/10\rceil$ if $m$ is odd. On the other hand, $n(n-2)=m^2-1$. If $m$ is even, then $m^2\equiv 0,4,6\pmod{10}$, so $\lceil m^2/10\rceil=\lceil (m^2-1)/10\rceil$. If $m$ is odd, then $m^2-1\equiv 0,4,8\pmod{10}$, so $\lceil (m^2-3)/10\rceil=\lceil (m^2-1)/10\rceil$.
\end{proof}

Since $n(n-2)/10$ is an integer exactly when $n\equiv 0,2\pmod{10}$, the bound of Theorem~\ref{maintheorem} equals $a(n)$ when $n\not\equiv 0,2\pmod{10}$ and equals $a(n)+1$ when $n\equiv 0,2\pmod{10}$; moreover, it holds for every $n\ge 3$. The packing sets are explicit: for $n\not\equiv 0,2\pmod{10}$ one of the sets $S_0,S_1$ of Section~\ref{sec:S} attains the bound, and for $n\equiv 0,2\pmod{10}$ the set $S^+$ of Section~\ref{sec:plus} does. For comparison, a vertex $\{i,i+1\}$ of $F_2(C_n)$ has degree $2$ and every other vertex has degree $4$, so the disjointness of closed neighbourhoods gives $\rho(F_2(C_n))\le \frac15\binom n2+O(n)$; the bound of Theorem~\ref{maintheorem} is $\frac15\binom n2-\frac n{10}+O(1)$. Using the values of $\rho(F_2(C_n))$ for $n\le 7$ from \cite[Table~1]{GR} and computing the remaining ones by integer programming with the solver HiGHS, we have checked that the bound of Theorem~\ref{maintheorem} is attained for every $3\le n\le 32$ except $n=8$ and $n=11$, where $\rho(F_2(C_8))=6$ and $\rho(F_2(C_{11}))=11$. Our values agree with \cite[Table~1]{GR} for $n\le 19$. We pose the following conjecture.

\begin{conjecture}\label{conj:exact}
For every $n\ge 12$,
\[
\rho(F_2(C_n)) = \left\lfloor\frac{n(n-2)}{10}\right\rfloor + 1.
\]
\end{conjecture}
By Theorem~\ref{maintheorem}, Conjecture~\ref{conj:exact} is equivalent to the upper bound $\rho(F_2(C_n))\le\lfloor n(n-2)/10\rfloor+1$ for $n\ge 12$.

The key observation is that the rhombus-shaped region used in~\cite{GR} is a fundamental domain of a glide reflection; making this explicit gives a short proof valid for all $n\ge 3$ and shows where one extra vertex can be gained. The note is organised as follows. Section~\ref{sec:model} describes $F_2(C_n)$ as the quotient of a strip of $\Z^2$ by a glide reflection, and Section~\ref{sec:proof} contains the proof of Theorem~\ref{maintheorem}.


\section{\texorpdfstring{$F_2(C_n)$}{F2(Cn)} as a discrete Möbius band}\label{sec:model}

From now on, $n\ge 3$ is an integer and we label the vertices of $C_n$ by the elements of $\Z_n=\{0,1,\dots,n-1\}$, so that $i$ and $j$ are adjacent if and only if $j\equiv i\pm1\pmod n$. We work in the set
\[
\Sigma=\{(s,g)\in\Z^2:\ 1\le g\le n-1,\ s\equiv g\pmod 2\},
\]
with the norm $\|(s,g)\|=\max\{|s|,|g|\}$, and we write $s(z)$ and $g(z)$ for the coordinates of $z\in\Z^2$. We define
\begin{align*}
\pi(s,g)&=\Bigl\{\tfrac{s-g}{2}\bmod n,\ \tfrac{s+g}{2}\bmod n\Bigr\},\\
\sigma(s,g)&=(s+n,\,n-g), \\
D&=\{z\in\Sigma:\ 0\le s(z)\le n-1\}.
\end{align*}
Thus $\pi(z)$ is the vertex of $F_2(C_n)$ whose tokens are at $(s-g)/2$ and $(s+g)/2$: the coordinate $s$ is the sum of the positions of the tokens and $g$ is their gap. The map $\sigma$ is a glide reflection with $\sigma(\Sigma)=\Sigma$ and $\pi\circ\sigma=\pi$; see Figure~\ref{fig:glide}.

\begin{lemma}\label{lem:model}
\begin{enumerate}
\item[(i)] For $z,z'\in\Sigma$ we have $\pi(z)=\pi(z')$ if and only if $z'=\sigma^k(z)$ for some $k\in\Z$. In particular, $\pi$ restricts to a bijection from $D$ onto $V(F_2(C_n))$; for $0\le a<b\le n-1$ the preimage of $\{a,b\}$ in $D$ is $(a+b,\,b-a)$ if $a+b\le n-1$, and $(a+b-n,\,n-b+a)$ if $a+b\ge n$.
\item[(ii)] For all $z,z'\in\Sigma$,
\[
d_{F_2(C_n)}\bigl(\pi(z),\pi(z')\bigr)\ \ge\ \min_{k\in\Z}\ \|z-\sigma^k(z')\| .
\]
\end{enumerate}
\end{lemma}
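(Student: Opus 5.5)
The plan is to prove both parts by working directly with token positions. For $z=(s,g)\in\Sigma$ put $x(z)=(s-g)/2$ and $y(z)=(s+g)/2$. These are integers because $s\equiv g\pmod 2$, and they satisfy $y-x=g$ and $x+y=s$. So $z\mapsto (x,y)$ identifies $\Sigma$ with the integer pairs $x<y$ with $1\le y-x\le n-1$, and $\pi(z)=\{x\bmod n,\ y\bmod n\}$.

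For (i), I would first check that $\pi\circ\sigma=\pi$. Under $\sigma$, the pair $(x,y)$ goes to $\bigl(y,\ x+n\bigr)$: indeed $(s+n-(n-g))/2=y$ and $(s+n+n-g)/2=x+n$. This gives the same unordered pair mod $n$, and it also shows $\sigma(\Sigma)=\Sigma$. For the converse, suppose $\pi(z)=\pi(z')$. There are two cases.
\begin{enumerate}
\item[(a)] $x'\equiv x$ and $y'\equiv y\pmod n$. Then $g'\equiv g\pmod n$, and since $1\le g,g'\le n-1$ this forces $g'=g$. Hence $x'-x=y'-y=nj$ for some $j$, so $s'=s+2nj$, and $z'=\sigma^{2j}(z)$ because $\sigma^2(s,g)=(s+2n,g)$.
\item[(b)] $x'\equiv y$ and $y'\equiv x\pmod n$. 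Then $g'\equiv -g\pmod n$, which forces $g'=n-g$. Writing $x'=y+nj$ and $y'=x+nl$, the relation $g'=n-g$ gives $l=j+1$. Then $s'=s+(2j+1)n$, so $z'=\sigma^{2j+1}(z)$.
\end{enumerate}
For the bijection onto $V(F_2(C_n))$, note that $\sigma^k$ shifts $s$ by $kn$. So every $\sigma$-orbit has exactly one point with $0\le s\le n-1$, i.e.\ exactly one point in $D$. Surjectivity of $\pi$ on $\Sigma$ is clear by taking $x=a$, $y=b$. The explicit preimages follow directly: $(a+b,b-a)$ lies in $D$ when $a+b\le n-1$. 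Otherwise apply $\sigma^{-1}$ to get $(a+b-n,\,n-b+a)$, which lies in $D$ because $a+b-n\in[0,n-1]$ when $a+b\ge n$.

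For (ii), the key step is a path-lifting property: for each $w\in\Sigma$ and each neighbour $u$ of $\pi(w)$ in $F_2(C_n)$, there is $w'\in\Sigma$ with $\|w-w'\|=1$ and $\pi(w')=u$. To prove it, note that $u$ arises by moving the token at $x\bmod n$ or at $y\bmod n$ by $\pm1$. Replace $x$ by $x\pm1$ (respectively $y$ by $y\pm1$) at the integer level; this changes $(s,g)$ by $(\pm1,\mp1)$ (respectively $(\pm1,\pm1)$). The new gap lies in $[0,n]$. It cannot be $0$ or $n$, since then the two tokens would occupy the same vertex of $C_n$, contradicting that the move is legal. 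Thus $w'\in\Sigma$, and $\pi(w')=u$ by construction.

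Now take a shortest path $\pi(z)=u_0,u_1,\dots,u_d=\pi(z')$. I would lift it step by step to $z=w_0,w_1,\dots,w_d$ with $\|w_{i}-w_{i-1}\|=1$. Since $\pi(w_d)=\pi(z')$, part (i) gives $w_d=\sigma^k(z')$ for some $k$. The triangle inequality then yields $\|z-\sigma^k(z')\|\le d$, which is the claim.

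The only delicate point is the lifting step. One must make sure that the chosen integer representatives $x,y$ of $w$ are the ones that get moved; working with the integer pair $(x,y)$ rather than residues removes this issue. One must also rule out the boundary gaps $0$ and $n$, which is exactly where legality of token moves enters.
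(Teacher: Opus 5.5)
Your proposal is correct and follows the paper's proof essentially step for step. It uses the same token coordinates, $(x,y)$ for the paper's $(u,v)$, and the same two-case fibre analysis for (i); the only cosmetic difference there is that you compare two arbitrary points $z,z'$ directly via the gap $g$, whereas the paper compares each point with the canonical lift $(a,b)$. For (ii) you use the same path lifting, with legality of token moves ruling out the gaps $0$ and $n$.
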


\begin{proof}
The change of variables $u=(s-g)/2$, $v=(s+g)/2$ is a bijection from $\Sigma$ onto the set $\{(u,v)\in\Z^2:0<v-u<n\}$. In these coordinates $\pi(u,v)=\{u\bmod n,\,v\bmod n\}$, which is a $2$-subset of $\Z_n$ because $0<v-u<n$, and $\sigma(u,v)=(v,\,u+n)$; in particular $\pi\circ\sigma=\pi$. Moreover, $\sigma^2(u,v)=(u+n,\,v+n)$, so for every $m\in\Z$ and every $(a,b)\in\Z^2$,
\[
\sigma^{2m}(a,b)=(a+mn,\ b+mn)\qquad\text{and}\qquad \sigma^{2m+1}(a,b)=(b+mn,\ a+(m+1)n).
\]

(i) Since $\pi\circ\sigma=\pi$, it suffices to show that every fibre of $\pi$ is a single $\sigma$-orbit. Fix $0\le a<b\le n-1$ and let $(u,v)$ be a point with $0<v-u<n$ and $\pi(u,v)=\{a,b\}$. If $u\equiv a$ and $v\equiv b\pmod n$, write $(u,v)=(a+mn,\,b+m'n)$; then $v-u=b-a+(m'-m)n$, and since $0<b-a<n$, the condition $0<v-u<n$ forces $m'=m$, so $(u,v)=\sigma^{2m}(a,b)$. Otherwise $u\equiv b$ and $v\equiv a\pmod n$; writing $(u,v)=(b+mn,\,a+m'n)$ we get $v-u=a-b+(m'-m)n$, and since $-n<a-b<0$ this forces $m'=m+1$, so $(u,v)=\sigma^{2m+1}(a,b)$. Thus the fibre of $\{a,b\}$ is exactly the orbit $\{\sigma^k(a,b):k\in\Z\}$. Since $s(\sigma^k z)=s(z)+kn$ and $s(a,b)=a+b$, the points of this orbit have first coordinate $a+b+kn$, $k\in\Z$, and exactly one of these values lies in $[0,n-1]$. Hence exactly one point of the orbit lies in $D$: it is $(a,b)$ if $a+b\le n-1$, and $\sigma^{-1}(a,b)=(b-n,\,a)$ if $a+b\ge n$. Written in the coordinates $(s,g)=(u+v,\,v-u)$, these points are $(a+b,\,b-a)$ and $(a+b-n,\,n-b+a)$, respectively.

(ii) Let $d=d_{F_2(C_n)}(\pi(z),\pi(z'))$, let $\pi(z)=A_0,A_1,\dots,A_d=\pi(z')$ be a geodesic, and put $z_0=z$. We construct inductively points $z_1,\dots,z_d\in\Sigma$ with $\pi(z_i)=A_i$ and $\|z_i-z_{i-1}\|=1$. If $z_{i-1}=(u,v)$ satisfies $\pi(z_{i-1})=A_{i-1}$, then $A_i$ is obtained from $A_{i-1}$ by moving the token at $u\bmod n$ or the one at $v\bmod n$ by $\pm1$, and we let $z_i=(u\pm1,\,v)$ or $z_i=(u,\,v\pm1)$ accordingly. Then $\pi(z_i)=A_i$ and $v-u$ changes by $\pm1$. The new value lies in $[0,n]$ and is neither $0$ nor $n$ because the two tokens of $A_i$ are distinct, so $z_i\in\Sigma$. In the coordinates $(s,g)$ each step is one of $(\pm1,\pm1)$, so $\|z_i-z_{i-1}\|=1$. Hence $\|z-z_d\|\le d$. Since $\pi(z_d)=\pi(z')$, part (i) gives $z_d=\sigma^k(z')$ for some $k\in\Z$, and therefore $\min_{k\in\Z}\|z-\sigma^k(z')\|\le d$.
\end{proof}

\begin{figure}[ht]
\centering
\begin{tikzpicture}
\fill[gray!10] (-2.300,0.115) rectangle (4.600,2.185);
\draw[thick] (-2.300,0.115)--(4.600,0.115);
\draw[thick] (-2.300,2.185)--(4.600,2.185);
\fill[codec!25] (-0.115,0.115) rectangle (2.185,2.185);
\node at (1.035,1.150) {$D$};
\node at (3.335,1.150) {$\sigma(D)$};
\node at (-1.265,1.150) {$\sigma^{-1}(D)$};
\draw[dashed,gray] (-2.415,0.115)--(-2.415,2.185);
\draw[dashed,gray] (-0.115,0.115)--(-0.115,2.185);
\draw[dashed,gray] (2.185,0.115)--(2.185,2.185);
\draw[dashed,gray] (4.485,0.115)--(4.485,2.185);
\draw[newc,very thick,fill=newc!40] (0.345,0.345)--(0.345,0.851)--(0.621,0.759)--(0.345,0.667)--cycle;
\draw[newc,very thick,fill=newc!40] (2.645,1.955)--(2.645,1.449)--(2.921,1.541)--(2.645,1.633)--cycle;
\draw[newc,very thick,fill=newc!40] (-1.955,1.955)--(-1.955,1.449)--(-1.679,1.541)--(-1.955,1.633)--cycle;
\draw[->,thick,bend left=25] (0.575,2.369) to node[above]{\small $\sigma(s,g)=(s+n,\,n-g)$} (2.875,2.369);
\node[right] at (4.600,0.115) {\scriptsize $g=0$};
\node[right] at (4.600,2.185) {\scriptsize $g=n$};
\end{tikzpicture}
\caption{The strip $\Sigma$ in the coordinates $(s,g)$, the fundamental domain $D$ and its images under the glide reflection $\sigma$. The flag shows that $\sigma$ reverses the orientation: $F_2(C_n)$ is a Möbius band.}
\label{fig:glide}
\end{figure}

Lemma~\ref{lem:model} says that $F_2(C_n)$ is a discrete Möbius band: the strip $\Sigma$ modulo the glide reflection $\sigma$. Its boundary, the row $g=1$ (identified with the row $g=n-1$), consists of the vertices $\{i,i+1\}$.

For an even integer $c$ let
\[
L_c=\{(s,g)\in\Z^2:\ g+3s\equiv c\pmod{10}\}.
\]
Since $c$ is even, every point of $L_c$ satisfies $s\equiv g\pmod 2$. Since $u+2v=(3s+g)/2$, in the coordinates $(u,v)$ the set $L_c$ is $\{u+2v\equiv c/2\pmod 5\}$, a perfect code of $\Z^2$ in the $\ell_1$ metric \cite{GW}; see Figure~\ref{fig:perfect}. We only need the packing property.

\begin{remark}\label{rem:GR}
In \cite{GR}, the graph $F_2(C_n)$ is represented by the graph $T_C(n-1)$, whose vertex $(x,y)$, $1\le x\le y\le n-1$, corresponds to the pair of tokens $\{x,y+1\}\subseteq\{1,\dots,n\}$, and the packing sets are built from the classes $f^{-1}(\ell)$ of the function $f(x,y)=x+2y\pmod 5$. With our labelling $V(C_n)=\Z_n$, these coordinates are
\[
(x,y)=(u+1,\,v)=\Bigl(\tfrac{s-g}{2}+1,\ \tfrac{s+g}{2}\Bigr),
\qquad\text{and}\qquad
L_c=\bigl\{(s,g):\ f(x,y)\equiv c/2+1\pmod 5\bigr\}.
\]
Thus the sets $L_c$ are exactly the classes $f^{-1}(\ell)$ of \cite{GR}, with $\ell\equiv c/2+1\pmod 5$, written in the rotated coordinates $(s,g)$. In these coordinates the $\ell_1$ metric used in \cite{GR} becomes the norm $\|\cdot\|$, since $|\Delta x|+|\Delta y|=\max\{|\Delta s|,|\Delta g|\}$, and the extra edges of $T_C(n-1)$ are accounted for by the glide reflection $\sigma$. The domain $D$ plays a role analogous to that of the rhombus graphs $\lozenge_{n,i}$ of \cite{GR}.
\end{remark}

\begin{lemma}\label{lem:code}
If $z\neq z'$ are points of $L_c$, then $\|z-z'\|\ge 3$.
\end{lemma}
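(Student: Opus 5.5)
The plan is to argue by contradiction with a small case analysis on the difference vector. Let $z\neq z'$ be points of $L_c$, put $w=z-z'=(\Delta s,\Delta g)$, and suppose $\|w\|\le 2$. Since both points lie in $L_c$, subtracting the defining congruences gives
\[
\Delta g+3\Delta s\equiv 0\pmod{10}.
\]
Because $c$ is even, both points satisfy $s\equiv g\pmod 2$, so $\Delta s\equiv\Delta g\pmod 2$. The goal is to show that no nonzero $w$ with $|\Delta s|,|\Delta g|\le 2$ and $\Delta s\equiv \Delta g\pmod 2$ satisfies this congruence.

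The key step is to bound the quantity $\Delta g+3\Delta s$. We have $|\Delta g+3\Delta s|\le 2+6=8<10$, so the congruence forces $\Delta g+3\Delta s=0$, that is, $\Delta g=-3\Delta s$. If $\Delta s\neq 0$, this gives $|\Delta g|\ge 3$, which contradicts $\|w\|\le 2$. If $\Delta s=0$, then $\Delta g=0$, so $w=0$ and $z=z'$, again a contradiction. Hence $\|z-z'\|\ge 3$.

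With this observation the parity condition is not even needed. There is no real obstacle here: the only point to check is the size estimate $|\Delta g+3\Delta s|<10$, which is what reduces the congruence to an equality. Alternatively, one could translate to the coordinates $(u,v)$, where $L_c=\{u+2v\equiv c/2\pmod 5\}$ and $\|\cdot\|$ becomes the $\ell_1$ norm, and invoke the perfect-code property from \cite{GW}. The direct computation above is shorter and self-contained, so that is the one I would write.
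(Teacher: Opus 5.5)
Your proof is correct and follows the paper's approach: both reduce the claim to the congruence $\Delta g+3\Delta s\equiv 0\pmod{10}$ on the difference vector. The paper finishes by listing, up to sign, the six parity-compatible nonzero vectors of norm at most $2$ and checking that $\delta_g+3\delta_s\in\{4,2,6,2,8,4\}$. Your bound $|\Delta g+3\Delta s|\le 8$ instead turns the congruence into the equation $\Delta g=-3\Delta s$, which removes both the case enumeration and any use of the parity condition.
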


\begin{proof}
Let $(\delta_s,\delta_g)=z-z'$. Then $\delta_g+3\delta_s\equiv 0\pmod{10}$ and $\delta_s\equiv\delta_g\pmod 2$. Up to sign, the nonzero vectors with $\delta_s\equiv\delta_g\pmod 2$ and norm at most $2$ are $(1,1)$, $(1,-1)$, $(2,0)$, $(0,2)$, $(2,2)$, $(2,-2)$, for which $\delta_g+3\delta_s$ equals $4,2,6,2,8,4$. None of these is divisible by $10$.
\end{proof}

\begin{figure}[ht]
\centering
\begin{tikzpicture}
\clip (-0.5*0.34,-0.5*0.34) rectangle (4.930,2.890);
\fill[pc5] (-0.850,0.170) rectangle (-0.510,0.510);
\fill[pc5] (-0.510,0.170) rectangle (-0.170,0.510);
\fill[pc5] (-1.190,0.170) rectangle (-0.850,0.510);
\fill[pc5] (-0.850,0.510) rectangle (-0.510,0.850);
\fill[pc5] (-0.850,-0.170) rectangle (-0.510,0.170);
\fill[pc3] (-0.850,1.870) rectangle (-0.510,2.210);
\fill[pc3] (-0.510,1.870) rectangle (-0.170,2.210);
\fill[pc3] (-1.190,1.870) rectangle (-0.850,2.210);
\fill[pc3] (-0.850,2.210) rectangle (-0.510,2.550);
\fill[pc3] (-0.850,1.530) rectangle (-0.510,1.870);
\fill[pc1] (-0.510,-0.850) rectangle (-0.170,-0.510);
\fill[pc1] (-0.170,-0.850) rectangle (0.170,-0.510);
\fill[pc1] (-0.850,-0.850) rectangle (-0.510,-0.510);
\fill[pc1] (-0.510,-0.510) rectangle (-0.170,-0.170);
\fill[pc1] (-0.510,-1.190) rectangle (-0.170,-0.850);
\fill[pc4] (-0.510,0.850) rectangle (-0.170,1.190);
\fill[pc4] (-0.170,0.850) rectangle (0.170,1.190);
\fill[pc4] (-0.850,0.850) rectangle (-0.510,1.190);
\fill[pc4] (-0.510,1.190) rectangle (-0.170,1.530);
\fill[pc4] (-0.510,0.510) rectangle (-0.170,0.850);
\fill[pc2] (-0.510,2.550) rectangle (-0.170,2.890);
\fill[pc2] (-0.170,2.550) rectangle (0.170,2.890);
\fill[pc2] (-0.850,2.550) rectangle (-0.510,2.890);
\fill[pc2] (-0.510,2.890) rectangle (-0.170,3.230);
\fill[pc2] (-0.510,2.210) rectangle (-0.170,2.550);
\fill[pc1] (-0.170,-0.170) rectangle (0.170,0.170);
\fill[pc1] (0.170,-0.170) rectangle (0.510,0.170);
\fill[pc1] (-0.510,-0.170) rectangle (-0.170,0.170);
\fill[pc1] (-0.170,0.170) rectangle (0.170,0.510);
\fill[pc1] (-0.170,-0.510) rectangle (0.170,-0.170);
\fill[pc4] (-0.170,1.530) rectangle (0.170,1.870);
\fill[pc4] (0.170,1.530) rectangle (0.510,1.870);
\fill[pc4] (-0.510,1.530) rectangle (-0.170,1.870);
\fill[pc4] (-0.170,1.870) rectangle (0.170,2.210);
\fill[pc4] (-0.170,1.190) rectangle (0.170,1.530);
\fill[pc2] (-0.170,3.230) rectangle (0.170,3.570);
\fill[pc2] (0.170,3.230) rectangle (0.510,3.570);
\fill[pc2] (-0.510,3.230) rectangle (-0.170,3.570);
\fill[pc2] (-0.170,3.570) rectangle (0.170,3.910);
\fill[pc2] (-0.170,2.890) rectangle (0.170,3.230);
\fill[pc5] (0.170,0.510) rectangle (0.510,0.850);
\fill[pc5] (0.510,0.510) rectangle (0.850,0.850);
\fill[pc5] (-0.170,0.510) rectangle (0.170,0.850);
\fill[pc5] (0.170,0.850) rectangle (0.510,1.190);
\fill[pc5] (0.170,0.170) rectangle (0.510,0.510);
\fill[pc3] (0.170,2.210) rectangle (0.510,2.550);
\fill[pc3] (0.510,2.210) rectangle (0.850,2.550);
\fill[pc3] (-0.170,2.210) rectangle (0.170,2.550);
\fill[pc3] (0.170,2.550) rectangle (0.510,2.890);
\fill[pc3] (0.170,1.870) rectangle (0.510,2.210);
\fill[pc1] (0.510,-0.510) rectangle (0.850,-0.170);
\fill[pc1] (0.850,-0.510) rectangle (1.190,-0.170);
\fill[pc1] (0.170,-0.510) rectangle (0.510,-0.170);
\fill[pc1] (0.510,-0.170) rectangle (0.850,0.170);
\fill[pc1] (0.510,-0.850) rectangle (0.850,-0.510);
\fill[pc4] (0.510,1.190) rectangle (0.850,1.530);
\fill[pc4] (0.850,1.190) rectangle (1.190,1.530);
\fill[pc4] (0.170,1.190) rectangle (0.510,1.530);
\fill[pc4] (0.510,1.530) rectangle (0.850,1.870);
\fill[pc4] (0.510,0.850) rectangle (0.850,1.190);
\fill[pc2] (0.510,2.890) rectangle (0.850,3.230);
\fill[pc2] (0.850,2.890) rectangle (1.190,3.230);
\fill[pc2] (0.170,2.890) rectangle (0.510,3.230);
\fill[pc2] (0.510,3.230) rectangle (0.850,3.570);
\fill[pc2] (0.510,2.550) rectangle (0.850,2.890);
\fill[pc1] (0.850,0.170) rectangle (1.190,0.510);
\fill[pc1] (1.190,0.170) rectangle (1.530,0.510);
\fill[pc1] (0.510,0.170) rectangle (0.850,0.510);
\fill[pc1] (0.850,0.510) rectangle (1.190,0.850);
\fill[pc1] (0.850,-0.170) rectangle (1.190,0.170);
\fill[pc4] (0.850,1.870) rectangle (1.190,2.210);
\fill[pc4] (1.190,1.870) rectangle (1.530,2.210);
\fill[pc4] (0.510,1.870) rectangle (0.850,2.210);
\fill[pc4] (0.850,2.210) rectangle (1.190,2.550);
\fill[pc4] (0.850,1.530) rectangle (1.190,1.870);
\fill[pc2] (1.190,-0.850) rectangle (1.530,-0.510);
\fill[pc2] (1.530,-0.850) rectangle (1.870,-0.510);
\fill[pc2] (0.850,-0.850) rectangle (1.190,-0.510);
\fill[pc2] (1.190,-0.510) rectangle (1.530,-0.170);
\fill[pc2] (1.190,-1.190) rectangle (1.530,-0.850);
\fill[pc5] (1.190,0.850) rectangle (1.530,1.190);
\fill[pc5] (1.530,0.850) rectangle (1.870,1.190);
\fill[pc5] (0.850,0.850) rectangle (1.190,1.190);
\fill[pc5] (1.190,1.190) rectangle (1.530,1.530);
\fill[pc5] (1.190,0.510) rectangle (1.530,0.850);
\fill[pc3] (1.190,2.550) rectangle (1.530,2.890);
\fill[pc3] (1.530,2.550) rectangle (1.870,2.890);
\fill[pc3] (0.850,2.550) rectangle (1.190,2.890);
\fill[pc3] (1.190,2.890) rectangle (1.530,3.230);
\fill[pc3] (1.190,2.210) rectangle (1.530,2.550);
\fill[pc2] (1.530,-0.170) rectangle (1.870,0.170);
\fill[pc2] (1.870,-0.170) rectangle (2.210,0.170);
\fill[pc2] (1.190,-0.170) rectangle (1.530,0.170);
\fill[pc2] (1.530,0.170) rectangle (1.870,0.510);
\fill[pc2] (1.530,-0.510) rectangle (1.870,-0.170);
\fill[pc5] (1.530,1.530) rectangle (1.870,1.870);
\fill[pc5] (1.870,1.530) rectangle (2.210,1.870);
\fill[pc5] (1.190,1.530) rectangle (1.530,1.870);
\fill[pc5] (1.530,1.870) rectangle (1.870,2.210);
\fill[pc5] (1.530,1.190) rectangle (1.870,1.530);
\fill[pc3] (1.530,3.230) rectangle (1.870,3.570);
\fill[pc3] (1.870,3.230) rectangle (2.210,3.570);
\fill[pc3] (1.190,3.230) rectangle (1.530,3.570);
\fill[pc3] (1.530,3.570) rectangle (1.870,3.910);
\fill[pc3] (1.530,2.890) rectangle (1.870,3.230);
\fill[pc1] (1.870,0.510) rectangle (2.210,0.850);
\fill[pc1] (2.210,0.510) rectangle (2.550,0.850);
\fill[pc1] (1.530,0.510) rectangle (1.870,0.850);
\fill[pc1] (1.870,0.850) rectangle (2.210,1.190);
\fill[pc1] (1.870,0.170) rectangle (2.210,0.510);
\fill[pc4] (1.870,2.210) rectangle (2.210,2.550);
\fill[pc4] (2.210,2.210) rectangle (2.550,2.550);
\fill[pc4] (1.530,2.210) rectangle (1.870,2.550);
\fill[pc4] (1.870,2.550) rectangle (2.210,2.890);
\fill[pc4] (1.870,1.870) rectangle (2.210,2.210);
\fill[pc2] (2.210,-0.510) rectangle (2.550,-0.170);
\fill[pc2] (2.550,-0.510) rectangle (2.890,-0.170);
\fill[pc2] (1.870,-0.510) rectangle (2.210,-0.170);
\fill[pc2] (2.210,-0.170) rectangle (2.550,0.170);
\fill[pc2] (2.210,-0.850) rectangle (2.550,-0.510);
\fill[pc5] (2.210,1.190) rectangle (2.550,1.530);
\fill[pc5] (2.550,1.190) rectangle (2.890,1.530);
\fill[pc5] (1.870,1.190) rectangle (2.210,1.530);
\fill[pc5] (2.210,1.530) rectangle (2.550,1.870);
\fill[pc5] (2.210,0.850) rectangle (2.550,1.190);
\fill[pc3] (2.210,2.890) rectangle (2.550,3.230);
\fill[pc3] (2.550,2.890) rectangle (2.890,3.230);
\fill[pc3] (1.870,2.890) rectangle (2.210,3.230);
\fill[pc3] (2.210,3.230) rectangle (2.550,3.570);
\fill[pc3] (2.210,2.550) rectangle (2.550,2.890);
\fill[pc2] (2.550,0.170) rectangle (2.890,0.510);
\fill[pc2] (2.890,0.170) rectangle (3.230,0.510);
\fill[pc2] (2.210,0.170) rectangle (2.550,0.510);
\fill[pc2] (2.550,0.510) rectangle (2.890,0.850);
\fill[pc2] (2.550,-0.170) rectangle (2.890,0.170);
\fill[pc5] (2.550,1.870) rectangle (2.890,2.210);
\fill[pc5] (2.890,1.870) rectangle (3.230,2.210);
\fill[pc5] (2.210,1.870) rectangle (2.550,2.210);
\fill[pc5] (2.550,2.210) rectangle (2.890,2.550);
\fill[pc5] (2.550,1.530) rectangle (2.890,1.870);
\fill[pc3] (2.890,-0.850) rectangle (3.230,-0.510);
\fill[pc3] (3.230,-0.850) rectangle (3.570,-0.510);
\fill[pc3] (2.550,-0.850) rectangle (2.890,-0.510);
\fill[pc3] (2.890,-0.510) rectangle (3.230,-0.170);
\fill[pc3] (2.890,-1.190) rectangle (3.230,-0.850);
\fill[pc1] (2.890,0.850) rectangle (3.230,1.190);
\fill[pc1] (3.230,0.850) rectangle (3.570,1.190);
\fill[pc1] (2.550,0.850) rectangle (2.890,1.190);
\fill[pc1] (2.890,1.190) rectangle (3.230,1.530);
\fill[pc1] (2.890,0.510) rectangle (3.230,0.850);
\fill[pc4] (2.890,2.550) rectangle (3.230,2.890);
\fill[pc4] (3.230,2.550) rectangle (3.570,2.890);
\fill[pc4] (2.550,2.550) rectangle (2.890,2.890);
\fill[pc4] (2.890,2.890) rectangle (3.230,3.230);
\fill[pc4] (2.890,2.210) rectangle (3.230,2.550);
\fill[pc3] (3.230,-0.170) rectangle (3.570,0.170);
\fill[pc3] (3.570,-0.170) rectangle (3.910,0.170);
\fill[pc3] (2.890,-0.170) rectangle (3.230,0.170);
\fill[pc3] (3.230,0.170) rectangle (3.570,0.510);
\fill[pc3] (3.230,-0.510) rectangle (3.570,-0.170);
\fill[pc1] (3.230,1.530) rectangle (3.570,1.870);
\fill[pc1] (3.570,1.530) rectangle (3.910,1.870);
\fill[pc1] (2.890,1.530) rectangle (3.230,1.870);
\fill[pc1] (3.230,1.870) rectangle (3.570,2.210);
\fill[pc1] (3.230,1.190) rectangle (3.570,1.530);
\fill[pc4] (3.230,3.230) rectangle (3.570,3.570);
\fill[pc4] (3.570,3.230) rectangle (3.910,3.570);
\fill[pc4] (2.890,3.230) rectangle (3.230,3.570);
\fill[pc4] (3.230,3.570) rectangle (3.570,3.910);
\fill[pc4] (3.230,2.890) rectangle (3.570,3.230);
\fill[pc2] (3.570,0.510) rectangle (3.910,0.850);
\fill[pc2] (3.910,0.510) rectangle (4.250,0.850);
\fill[pc2] (3.230,0.510) rectangle (3.570,0.850);
\fill[pc2] (3.570,0.850) rectangle (3.910,1.190);
\fill[pc2] (3.570,0.170) rectangle (3.910,0.510);
\fill[pc5] (3.570,2.210) rectangle (3.910,2.550);
\fill[pc5] (3.910,2.210) rectangle (4.250,2.550);
\fill[pc5] (3.230,2.210) rectangle (3.570,2.550);
\fill[pc5] (3.570,2.550) rectangle (3.910,2.890);
\fill[pc5] (3.570,1.870) rectangle (3.910,2.210);
\fill[pc3] (3.910,-0.510) rectangle (4.250,-0.170);
\fill[pc3] (4.250,-0.510) rectangle (4.590,-0.170);
\fill[pc3] (3.570,-0.510) rectangle (3.910,-0.170);
\fill[pc3] (3.910,-0.170) rectangle (4.250,0.170);
\fill[pc3] (3.910,-0.850) rectangle (4.250,-0.510);
\fill[pc1] (3.910,1.190) rectangle (4.250,1.530);
\fill[pc1] (4.250,1.190) rectangle (4.590,1.530);
\fill[pc1] (3.570,1.190) rectangle (3.910,1.530);
\fill[pc1] (3.910,1.530) rectangle (4.250,1.870);
\fill[pc1] (3.910,0.850) rectangle (4.250,1.190);
\fill[pc4] (3.910,2.890) rectangle (4.250,3.230);
\fill[pc4] (4.250,2.890) rectangle (4.590,3.230);
\fill[pc4] (3.570,2.890) rectangle (3.910,3.230);
\fill[pc4] (3.910,3.230) rectangle (4.250,3.570);
\fill[pc4] (3.910,2.550) rectangle (4.250,2.890);
\fill[pc3] (4.250,0.170) rectangle (4.590,0.510);
\fill[pc3] (4.590,0.170) rectangle (4.930,0.510);
\fill[pc3] (3.910,0.170) rectangle (4.250,0.510);
\fill[pc3] (4.250,0.510) rectangle (4.590,0.850);
\fill[pc3] (4.250,-0.170) rectangle (4.590,0.170);
\fill[pc1] (4.250,1.870) rectangle (4.590,2.210);
\fill[pc1] (4.590,1.870) rectangle (4.930,2.210);
\fill[pc1] (3.910,1.870) rectangle (4.250,2.210);
\fill[pc1] (4.250,2.210) rectangle (4.590,2.550);
\fill[pc1] (4.250,1.530) rectangle (4.590,1.870);
\fill[pc4] (4.590,-0.850) rectangle (4.930,-0.510);
\fill[pc4] (4.930,-0.850) rectangle (5.270,-0.510);
\fill[pc4] (4.250,-0.850) rectangle (4.590,-0.510);
\fill[pc4] (4.590,-0.510) rectangle (4.930,-0.170);
\fill[pc4] (4.590,-1.190) rectangle (4.930,-0.850);
\fill[pc2] (4.590,0.850) rectangle (4.930,1.190);
\fill[pc2] (4.930,0.850) rectangle (5.270,1.190);
\fill[pc2] (4.250,0.850) rectangle (4.590,1.190);
\fill[pc2] (4.590,1.190) rectangle (4.930,1.530);
\fill[pc2] (4.590,0.510) rectangle (4.930,0.850);
\fill[pc5] (4.590,2.550) rectangle (4.930,2.890);
\fill[pc5] (4.930,2.550) rectangle (5.270,2.890);
\fill[pc5] (4.250,2.550) rectangle (4.590,2.890);
\fill[pc5] (4.590,2.890) rectangle (4.930,3.230);
\fill[pc5] (4.590,2.210) rectangle (4.930,2.550);
\fill[pc4] (4.930,-0.170) rectangle (5.270,0.170);
\fill[pc4] (5.270,-0.170) rectangle (5.610,0.170);
\fill[pc4] (4.590,-0.170) rectangle (4.930,0.170);
\fill[pc4] (4.930,0.170) rectangle (5.270,0.510);
\fill[pc4] (4.930,-0.510) rectangle (5.270,-0.170);
\fill[pc2] (4.930,1.530) rectangle (5.270,1.870);
\fill[pc2] (5.270,1.530) rectangle (5.610,1.870);
\fill[pc2] (4.590,1.530) rectangle (4.930,1.870);
\fill[pc2] (4.930,1.870) rectangle (5.270,2.210);
\fill[pc2] (4.930,1.190) rectangle (5.270,1.530);
\fill[pc5] (4.930,3.230) rectangle (5.270,3.570);
\fill[pc5] (5.270,3.230) rectangle (5.610,3.570);
\fill[pc5] (4.590,3.230) rectangle (4.930,3.570);
\fill[pc5] (4.930,3.570) rectangle (5.270,3.910);
\fill[pc5] (4.930,2.890) rectangle (5.270,3.230);
\fill[pc3] (5.270,0.510) rectangle (5.610,0.850);
\fill[pc3] (5.610,0.510) rectangle (5.950,0.850);
\fill[pc3] (4.930,0.510) rectangle (5.270,0.850);
\fill[pc3] (5.270,0.850) rectangle (5.610,1.190);
\fill[pc3] (5.270,0.170) rectangle (5.610,0.510);
\fill[pc1] (5.270,2.210) rectangle (5.610,2.550);
\fill[pc1] (5.610,2.210) rectangle (5.950,2.550);
\fill[pc1] (4.930,2.210) rectangle (5.270,2.550);
\fill[pc1] (5.270,2.550) rectangle (5.610,2.890);
\fill[pc1] (5.270,1.870) rectangle (5.610,2.210);
\draw[white,line width=0.4pt] (-0.510,-0.340)--(-0.510,3.060);
\draw[white,line width=0.4pt] (-0.170,-0.340)--(-0.170,3.060);
\draw[white,line width=0.4pt] (0.170,-0.340)--(0.170,3.060);
\draw[white,line width=0.4pt] (0.510,-0.340)--(0.510,3.060);
\draw[white,line width=0.4pt] (0.850,-0.340)--(0.850,3.060);
\draw[white,line width=0.4pt] (1.190,-0.340)--(1.190,3.060);
\draw[white,line width=0.4pt] (1.530,-0.340)--(1.530,3.060);
\draw[white,line width=0.4pt] (1.870,-0.340)--(1.870,3.060);
\draw[white,line width=0.4pt] (2.210,-0.340)--(2.210,3.060);
\draw[white,line width=0.4pt] (2.550,-0.340)--(2.550,3.060);
\draw[white,line width=0.4pt] (2.890,-0.340)--(2.890,3.060);
\draw[white,line width=0.4pt] (3.230,-0.340)--(3.230,3.060);
\draw[white,line width=0.4pt] (3.570,-0.340)--(3.570,3.060);
\draw[white,line width=0.4pt] (3.910,-0.340)--(3.910,3.060);
\draw[white,line width=0.4pt] (4.250,-0.340)--(4.250,3.060);
\draw[white,line width=0.4pt] (4.590,-0.340)--(4.590,3.060);
\draw[white,line width=0.4pt] (4.930,-0.340)--(4.930,3.060);
\draw[white,line width=0.4pt] (-0.340,-0.510)--(5.100,-0.510);
\draw[white,line width=0.4pt] (-0.340,-0.170)--(5.100,-0.170);
\draw[white,line width=0.4pt] (-0.340,0.170)--(5.100,0.170);
\draw[white,line width=0.4pt] (-0.340,0.510)--(5.100,0.510);
\draw[white,line width=0.4pt] (-0.340,0.850)--(5.100,0.850);
\draw[white,line width=0.4pt] (-0.340,1.190)--(5.100,1.190);
\draw[white,line width=0.4pt] (-0.340,1.530)--(5.100,1.530);
\draw[white,line width=0.4pt] (-0.340,1.870)--(5.100,1.870);
\draw[white,line width=0.4pt] (-0.340,2.210)--(5.100,2.210);
\draw[white,line width=0.4pt] (-0.340,2.550)--(5.100,2.550);
\draw[white,line width=0.4pt] (-0.340,2.890)--(5.100,2.890);
\fill[black] (-0.680,0.340) circle (0.05);
\fill[black] (-0.680,2.040) circle (0.05);
\fill[black] (-0.340,-0.680) circle (0.05);
\fill[black] (-0.340,1.020) circle (0.05);
\fill[black] (-0.340,2.720) circle (0.05);
\fill[black] (0.000,0.000) circle (0.05);
\fill[black] (0.000,1.700) circle (0.05);
\fill[black] (0.000,3.400) circle (0.05);
\fill[black] (0.340,0.680) circle (0.05);
\fill[black] (0.340,2.380) circle (0.05);
\fill[black] (0.680,-0.340) circle (0.05);
\fill[black] (0.680,1.360) circle (0.05);
\fill[black] (0.680,3.060) circle (0.05);
\fill[black] (1.020,0.340) circle (0.05);
\fill[black] (1.020,2.040) circle (0.05);
\fill[black] (1.360,-0.680) circle (0.05);
\fill[black] (1.360,1.020) circle (0.05);
\fill[black] (1.360,2.720) circle (0.05);
\fill[black] (1.700,0.000) circle (0.05);
\fill[black] (1.700,1.700) circle (0.05);
\fill[black] (1.700,3.400) circle (0.05);
\fill[black] (2.040,0.680) circle (0.05);
\fill[black] (2.040,2.380) circle (0.05);
\fill[black] (2.380,-0.340) circle (0.05);
\fill[black] (2.380,1.360) circle (0.05);
\fill[black] (2.380,3.060) circle (0.05);
\fill[black] (2.720,0.340) circle (0.05);
\fill[black] (2.720,2.040) circle (0.05);
\fill[black] (3.060,-0.680) circle (0.05);
\fill[black] (3.060,1.020) circle (0.05);
\fill[black] (3.060,2.720) circle (0.05);
\fill[black] (3.400,0.000) circle (0.05);
\fill[black] (3.400,1.700) circle (0.05);
\fill[black] (3.400,3.400) circle (0.05);
\fill[black] (3.740,0.680) circle (0.05);
\fill[black] (3.740,2.380) circle (0.05);
\fill[black] (4.080,-0.340) circle (0.05);
\fill[black] (4.080,1.360) circle (0.05);
\fill[black] (4.080,3.060) circle (0.05);
\fill[black] (4.420,0.340) circle (0.05);
\fill[black] (4.420,2.040) circle (0.05);
\fill[black] (4.760,-0.680) circle (0.05);
\fill[black] (4.760,1.020) circle (0.05);
\fill[black] (4.760,2.720) circle (0.05);
\fill[black] (5.100,0.000) circle (0.05);
\fill[black] (5.100,1.700) circle (0.05);
\fill[black] (5.100,3.400) circle (0.05);
\fill[black] (5.440,0.680) circle (0.05);
\fill[black] (5.440,2.380) circle (0.05);
\end{tikzpicture}
\caption{The perfect code $\{u+2v\equiv 0\pmod 5\}$ of $\Z^2$, drawn in the coordinates $(u,v)$: the $\ell_1$-balls of radius $1$ centred at its points tile the plane. In the coordinates $(s,g)=(u+v,\,v-u)$ it is the set $L_0$.}
\label{fig:perfect}
\end{figure}

\begin{remark}
Since $g(\sigma z)+3s(\sigma z)=4n+3s(z)-g(z)$, the glide reflection $\sigma$ maps each $L_c$ to a code of the mirror family $\{3s-g\equiv c'\pmod{10}\}$. Hence no set $L_c$ is $\sigma$-invariant, and a packing set built from $L_c$ must have a defect along a seam crossing the Möbius band. This is the source of the loss of about $n/10$ vertices with respect to $\frac15\binom n2$.
\end{remark}


\section{Proof of Theorem~\ref{maintheorem}}\label{sec:proof}

\subsection{The sets \texorpdfstring{$S_0$}{S0} and \texorpdfstring{$S_1$}{S1}}\label{sec:S}

For $\varepsilon\in\{0,1\}$ let
\begin{equation}\label{eq:S}
S_\varepsilon=\bigl\{z\in L_{2n+2\varepsilon}:\ \varepsilon\le s(z)\le n-2+\varepsilon,\ \ 1\le g(z)\le n-1\bigr\}\subseteq\Sigma .
\end{equation}
In other words, $S_\varepsilon$ consists of the points of a perfect code lying in the $n-1$ consecutive columns $\varepsilon\le s\le n-2+\varepsilon$ of the strip. These columns lie in $D$, so by Lemma~\ref{lem:model}(i) the map $\pi$ is injective on $S_\varepsilon$, and we identify $S_\varepsilon$ with $\pi(S_\varepsilon)$. By Lemma~\ref{lem:model}(i), in terms of pairs $0\le a<b\le n-1$ (congruences modulo $5$):
\[
\{a,b\}\in S_\varepsilon\iff
\begin{cases}
a+b\le n-2+\varepsilon\ \text{ and }\ a+2b\equiv n+\varepsilon,\quad\text{or}\\[1mm]
a+b\ge n+\varepsilon\ \text{ and }\ 2a+b\equiv 2n+\varepsilon .
\end{cases}
\]

\begin{example}\label{ex:19}
For $n=19$ and $\varepsilon=1$, the set $S_1$ consists of the $33$ pairs
$\{0,5\}$, $\{0,10\}$, $\{0,15\}$, $\{1,2\}$, $\{1,7\}$, $\{1,12\}$, $\{1,17\}$, $\{2,4\}$, $\{2,9\}$, $\{2,14\}$, $\{3,6\}$, $\{3,11\}$, $\{3,18\}$, $\{4,8\}$, $\{4,13\}$, $\{4,16\}$, $\{5,10\}$, $\{6,7\}$, $\{6,12\}$, $\{6,17\}$, $\{7,9\}$, $\{7,15\}$, $\{8,13\}$, $\{8,18\}$, $\{9,11\}$, $\{9,16\}$, $\{10,14\}$, $\{11,12\}$, $\{11,17\}$, $\{12,15\}$, $\{13,18\}$, $\{14,16\}$, $\{16,17\}$;
see Figure~\ref{fig:S19}. Here $\lceil 19\cdot 17/10\rceil=33$.
\end{example}

\begin{figure}[ht]
\centering
\begin{tikzpicture}
\draw[gridc] (0.000,0.500)--(0.250,0.750);
\draw[gridc] (0.000,0.500)--(0.250,0.250);
\draw[gridc] (0.000,1.000)--(0.250,1.250);
\draw[gridc] (0.000,1.000)--(0.250,0.750);
\draw[gridc] (0.000,1.500)--(0.250,1.750);
\draw[gridc] (0.000,1.500)--(0.250,1.250);
\draw[gridc] (0.000,2.000)--(0.250,2.250);
\draw[gridc] (0.000,2.000)--(0.250,1.750);
\draw[gridc] (0.000,2.500)--(0.250,2.750);
\draw[gridc] (0.000,2.500)--(0.250,2.250);
\draw[gridc] (0.000,3.000)--(0.250,3.250);
\draw[gridc] (0.000,3.000)--(0.250,2.750);
\draw[gridc] (0.000,3.500)--(0.250,3.750);
\draw[gridc] (0.000,3.500)--(0.250,3.250);
\draw[gridc] (0.000,4.000)--(0.250,4.250);
\draw[gridc] (0.000,4.000)--(0.250,3.750);
\draw[gridc] (0.000,4.500)--(0.250,4.250);
\draw[gridc] (0.250,0.250)--(0.500,0.500);
\draw[gridc] (0.250,0.750)--(0.500,1.000);
\draw[gridc] (0.250,0.750)--(0.500,0.500);
\draw[gridc] (0.250,1.250)--(0.500,1.500);
\draw[gridc] (0.250,1.250)--(0.500,1.000);
\draw[gridc] (0.250,1.750)--(0.500,2.000);
\draw[gridc] (0.250,1.750)--(0.500,1.500);
\draw[gridc] (0.250,2.250)--(0.500,2.500);
\draw[gridc] (0.250,2.250)--(0.500,2.000);
\draw[gridc] (0.250,2.750)--(0.500,3.000);
\draw[gridc] (0.250,2.750)--(0.500,2.500);
\draw[gridc] (0.250,3.250)--(0.500,3.500);
\draw[gridc] (0.250,3.250)--(0.500,3.000);
\draw[gridc] (0.250,3.750)--(0.500,4.000);
\draw[gridc] (0.250,3.750)--(0.500,3.500);
\draw[gridc] (0.250,4.250)--(0.500,4.500);
\draw[gridc] (0.250,4.250)--(0.500,4.000);
\draw[gridc] (0.500,0.500)--(0.750,0.750);
\draw[gridc] (0.500,0.500)--(0.750,0.250);
\draw[gridc] (0.500,1.000)--(0.750,1.250);
\draw[gridc] (0.500,1.000)--(0.750,0.750);
\draw[gridc] (0.500,1.500)--(0.750,1.750);
\draw[gridc] (0.500,1.500)--(0.750,1.250);
\draw[gridc] (0.500,2.000)--(0.750,2.250);
\draw[gridc] (0.500,2.000)--(0.750,1.750);
\draw[gridc] (0.500,2.500)--(0.750,2.750);
\draw[gridc] (0.500,2.500)--(0.750,2.250);
\draw[gridc] (0.500,3.000)--(0.750,3.250);
\draw[gridc] (0.500,3.000)--(0.750,2.750);
\draw[gridc] (0.500,3.500)--(0.750,3.750);
\draw[gridc] (0.500,3.500)--(0.750,3.250);
\draw[gridc] (0.500,4.000)--(0.750,4.250);
\draw[gridc] (0.500,4.000)--(0.750,3.750);
\draw[gridc] (0.500,4.500)--(0.750,4.250);
\draw[gridc] (0.750,0.250)--(1.000,0.500);
\draw[gridc] (0.750,0.750)--(1.000,1.000);
\draw[gridc] (0.750,0.750)--(1.000,0.500);
\draw[gridc] (0.750,1.250)--(1.000,1.500);
\draw[gridc] (0.750,1.250)--(1.000,1.000);
\draw[gridc] (0.750,1.750)--(1.000,2.000);
\draw[gridc] (0.750,1.750)--(1.000,1.500);
\draw[gridc] (0.750,2.250)--(1.000,2.500);
\draw[gridc] (0.750,2.250)--(1.000,2.000);
\draw[gridc] (0.750,2.750)--(1.000,3.000);
\draw[gridc] (0.750,2.750)--(1.000,2.500);
\draw[gridc] (0.750,3.250)--(1.000,3.500);
\draw[gridc] (0.750,3.250)--(1.000,3.000);
\draw[gridc] (0.750,3.750)--(1.000,4.000);
\draw[gridc] (0.750,3.750)--(1.000,3.500);
\draw[gridc] (0.750,4.250)--(1.000,4.500);
\draw[gridc] (0.750,4.250)--(1.000,4.000);
\draw[gridc] (1.000,0.500)--(1.250,0.750);
\draw[gridc] (1.000,0.500)--(1.250,0.250);
\draw[gridc] (1.000,1.000)--(1.250,1.250);
\draw[gridc] (1.000,1.000)--(1.250,0.750);
\draw[gridc] (1.000,1.500)--(1.250,1.750);
\draw[gridc] (1.000,1.500)--(1.250,1.250);
\draw[gridc] (1.000,2.000)--(1.250,2.250);
\draw[gridc] (1.000,2.000)--(1.250,1.750);
\draw[gridc] (1.000,2.500)--(1.250,2.750);
\draw[gridc] (1.000,2.500)--(1.250,2.250);
\draw[gridc] (1.000,3.000)--(1.250,3.250);
\draw[gridc] (1.000,3.000)--(1.250,2.750);
\draw[gridc] (1.000,3.500)--(1.250,3.750);
\draw[gridc] (1.000,3.500)--(1.250,3.250);
\draw[gridc] (1.000,4.000)--(1.250,4.250);
\draw[gridc] (1.000,4.000)--(1.250,3.750);
\draw[gridc] (1.000,4.500)--(1.250,4.250);
\draw[gridc] (1.250,0.250)--(1.500,0.500);
\draw[gridc] (1.250,0.750)--(1.500,1.000);
\draw[gridc] (1.250,0.750)--(1.500,0.500);
\draw[gridc] (1.250,1.250)--(1.500,1.500);
\draw[gridc] (1.250,1.250)--(1.500,1.000);
\draw[gridc] (1.250,1.750)--(1.500,2.000);
\draw[gridc] (1.250,1.750)--(1.500,1.500);
\draw[gridc] (1.250,2.250)--(1.500,2.500);
\draw[gridc] (1.250,2.250)--(1.500,2.000);
\draw[gridc] (1.250,2.750)--(1.500,3.000);
\draw[gridc] (1.250,2.750)--(1.500,2.500);
\draw[gridc] (1.250,3.250)--(1.500,3.500);
\draw[gridc] (1.250,3.250)--(1.500,3.000);
\draw[gridc] (1.250,3.750)--(1.500,4.000);
\draw[gridc] (1.250,3.750)--(1.500,3.500);
\draw[gridc] (1.250,4.250)--(1.500,4.500);
\draw[gridc] (1.250,4.250)--(1.500,4.000);
\draw[gridc] (1.500,0.500)--(1.750,0.750);
\draw[gridc] (1.500,0.500)--(1.750,0.250);
\draw[gridc] (1.500,1.000)--(1.750,1.250);
\draw[gridc] (1.500,1.000)--(1.750,0.750);
\draw[gridc] (1.500,1.500)--(1.750,1.750);
\draw[gridc] (1.500,1.500)--(1.750,1.250);
\draw[gridc] (1.500,2.000)--(1.750,2.250);
\draw[gridc] (1.500,2.000)--(1.750,1.750);
\draw[gridc] (1.500,2.500)--(1.750,2.750);
\draw[gridc] (1.500,2.500)--(1.750,2.250);
\draw[gridc] (1.500,3.000)--(1.750,3.250);
\draw[gridc] (1.500,3.000)--(1.750,2.750);
\draw[gridc] (1.500,3.500)--(1.750,3.750);
\draw[gridc] (1.500,3.500)--(1.750,3.250);
\draw[gridc] (1.500,4.000)--(1.750,4.250);
\draw[gridc] (1.500,4.000)--(1.750,3.750);
\draw[gridc] (1.500,4.500)--(1.750,4.250);
\draw[gridc] (1.750,0.250)--(2.000,0.500);
\draw[gridc] (1.750,0.750)--(2.000,1.000);
\draw[gridc] (1.750,0.750)--(2.000,0.500);
\draw[gridc] (1.750,1.250)--(2.000,1.500);
\draw[gridc] (1.750,1.250)--(2.000,1.000);
\draw[gridc] (1.750,1.750)--(2.000,2.000);
\draw[gridc] (1.750,1.750)--(2.000,1.500);
\draw[gridc] (1.750,2.250)--(2.000,2.500);
\draw[gridc] (1.750,2.250)--(2.000,2.000);
\draw[gridc] (1.750,2.750)--(2.000,3.000);
\draw[gridc] (1.750,2.750)--(2.000,2.500);
\draw[gridc] (1.750,3.250)--(2.000,3.500);
\draw[gridc] (1.750,3.250)--(2.000,3.000);
\draw[gridc] (1.750,3.750)--(2.000,4.000);
\draw[gridc] (1.750,3.750)--(2.000,3.500);
\draw[gridc] (1.750,4.250)--(2.000,4.500);
\draw[gridc] (1.750,4.250)--(2.000,4.000);
\draw[gridc] (2.000,0.500)--(2.250,0.750);
\draw[gridc] (2.000,0.500)--(2.250,0.250);
\draw[gridc] (2.000,1.000)--(2.250,1.250);
\draw[gridc] (2.000,1.000)--(2.250,0.750);
\draw[gridc] (2.000,1.500)--(2.250,1.750);
\draw[gridc] (2.000,1.500)--(2.250,1.250);
\draw[gridc] (2.000,2.000)--(2.250,2.250);
\draw[gridc] (2.000,2.000)--(2.250,1.750);
\draw[gridc] (2.000,2.500)--(2.250,2.750);
\draw[gridc] (2.000,2.500)--(2.250,2.250);
\draw[gridc] (2.000,3.000)--(2.250,3.250);
\draw[gridc] (2.000,3.000)--(2.250,2.750);
\draw[gridc] (2.000,3.500)--(2.250,3.750);
\draw[gridc] (2.000,3.500)--(2.250,3.250);
\draw[gridc] (2.000,4.000)--(2.250,4.250);
\draw[gridc] (2.000,4.000)--(2.250,3.750);
\draw[gridc] (2.000,4.500)--(2.250,4.250);
\draw[gridc] (2.250,0.250)--(2.500,0.500);
\draw[gridc] (2.250,0.750)--(2.500,1.000);
\draw[gridc] (2.250,0.750)--(2.500,0.500);
\draw[gridc] (2.250,1.250)--(2.500,1.500);
\draw[gridc] (2.250,1.250)--(2.500,1.000);
\draw[gridc] (2.250,1.750)--(2.500,2.000);
\draw[gridc] (2.250,1.750)--(2.500,1.500);
\draw[gridc] (2.250,2.250)--(2.500,2.500);
\draw[gridc] (2.250,2.250)--(2.500,2.000);
\draw[gridc] (2.250,2.750)--(2.500,3.000);
\draw[gridc] (2.250,2.750)--(2.500,2.500);
\draw[gridc] (2.250,3.250)--(2.500,3.500);
\draw[gridc] (2.250,3.250)--(2.500,3.000);
\draw[gridc] (2.250,3.750)--(2.500,4.000);
\draw[gridc] (2.250,3.750)--(2.500,3.500);
\draw[gridc] (2.250,4.250)--(2.500,4.500);
\draw[gridc] (2.250,4.250)--(2.500,4.000);
\draw[gridc] (2.500,0.500)--(2.750,0.750);
\draw[gridc] (2.500,0.500)--(2.750,0.250);
\draw[gridc] (2.500,1.000)--(2.750,1.250);
\draw[gridc] (2.500,1.000)--(2.750,0.750);
\draw[gridc] (2.500,1.500)--(2.750,1.750);
\draw[gridc] (2.500,1.500)--(2.750,1.250);
\draw[gridc] (2.500,2.000)--(2.750,2.250);
\draw[gridc] (2.500,2.000)--(2.750,1.750);
\draw[gridc] (2.500,2.500)--(2.750,2.750);
\draw[gridc] (2.500,2.500)--(2.750,2.250);
\draw[gridc] (2.500,3.000)--(2.750,3.250);
\draw[gridc] (2.500,3.000)--(2.750,2.750);
\draw[gridc] (2.500,3.500)--(2.750,3.750);
\draw[gridc] (2.500,3.500)--(2.750,3.250);
\draw[gridc] (2.500,4.000)--(2.750,4.250);
\draw[gridc] (2.500,4.000)--(2.750,3.750);
\draw[gridc] (2.500,4.500)--(2.750,4.250);
\draw[gridc] (2.750,0.250)--(3.000,0.500);
\draw[gridc] (2.750,0.750)--(3.000,1.000);
\draw[gridc] (2.750,0.750)--(3.000,0.500);
\draw[gridc] (2.750,1.250)--(3.000,1.500);
\draw[gridc] (2.750,1.250)--(3.000,1.000);
\draw[gridc] (2.750,1.750)--(3.000,2.000);
\draw[gridc] (2.750,1.750)--(3.000,1.500);
\draw[gridc] (2.750,2.250)--(3.000,2.500);
\draw[gridc] (2.750,2.250)--(3.000,2.000);
\draw[gridc] (2.750,2.750)--(3.000,3.000);
\draw[gridc] (2.750,2.750)--(3.000,2.500);
\draw[gridc] (2.750,3.250)--(3.000,3.500);
\draw[gridc] (2.750,3.250)--(3.000,3.000);
\draw[gridc] (2.750,3.750)--(3.000,4.000);
\draw[gridc] (2.750,3.750)--(3.000,3.500);
\draw[gridc] (2.750,4.250)--(3.000,4.500);
\draw[gridc] (2.750,4.250)--(3.000,4.000);
\draw[gridc] (3.000,0.500)--(3.250,0.750);
\draw[gridc] (3.000,0.500)--(3.250,0.250);
\draw[gridc] (3.000,1.000)--(3.250,1.250);
\draw[gridc] (3.000,1.000)--(3.250,0.750);
\draw[gridc] (3.000,1.500)--(3.250,1.750);
\draw[gridc] (3.000,1.500)--(3.250,1.250);
\draw[gridc] (3.000,2.000)--(3.250,2.250);
\draw[gridc] (3.000,2.000)--(3.250,1.750);
\draw[gridc] (3.000,2.500)--(3.250,2.750);
\draw[gridc] (3.000,2.500)--(3.250,2.250);
\draw[gridc] (3.000,3.000)--(3.250,3.250);
\draw[gridc] (3.000,3.000)--(3.250,2.750);
\draw[gridc] (3.000,3.500)--(3.250,3.750);
\draw[gridc] (3.000,3.500)--(3.250,3.250);
\draw[gridc] (3.000,4.000)--(3.250,4.250);
\draw[gridc] (3.000,4.000)--(3.250,3.750);
\draw[gridc] (3.000,4.500)--(3.250,4.250);
\draw[gridc] (3.250,0.250)--(3.500,0.500);
\draw[gridc] (3.250,0.750)--(3.500,1.000);
\draw[gridc] (3.250,0.750)--(3.500,0.500);
\draw[gridc] (3.250,1.250)--(3.500,1.500);
\draw[gridc] (3.250,1.250)--(3.500,1.000);
\draw[gridc] (3.250,1.750)--(3.500,2.000);
\draw[gridc] (3.250,1.750)--(3.500,1.500);
\draw[gridc] (3.250,2.250)--(3.500,2.500);
\draw[gridc] (3.250,2.250)--(3.500,2.000);
\draw[gridc] (3.250,2.750)--(3.500,3.000);
\draw[gridc] (3.250,2.750)--(3.500,2.500);
\draw[gridc] (3.250,3.250)--(3.500,3.500);
\draw[gridc] (3.250,3.250)--(3.500,3.000);
\draw[gridc] (3.250,3.750)--(3.500,4.000);
\draw[gridc] (3.250,3.750)--(3.500,3.500);
\draw[gridc] (3.250,4.250)--(3.500,4.500);
\draw[gridc] (3.250,4.250)--(3.500,4.000);
\draw[gridc] (3.500,0.500)--(3.750,0.750);
\draw[gridc] (3.500,0.500)--(3.750,0.250);
\draw[gridc] (3.500,1.000)--(3.750,1.250);
\draw[gridc] (3.500,1.000)--(3.750,0.750);
\draw[gridc] (3.500,1.500)--(3.750,1.750);
\draw[gridc] (3.500,1.500)--(3.750,1.250);
\draw[gridc] (3.500,2.000)--(3.750,2.250);
\draw[gridc] (3.500,2.000)--(3.750,1.750);
\draw[gridc] (3.500,2.500)--(3.750,2.750);
\draw[gridc] (3.500,2.500)--(3.750,2.250);
\draw[gridc] (3.500,3.000)--(3.750,3.250);
\draw[gridc] (3.500,3.000)--(3.750,2.750);
\draw[gridc] (3.500,3.500)--(3.750,3.750);
\draw[gridc] (3.500,3.500)--(3.750,3.250);
\draw[gridc] (3.500,4.000)--(3.750,4.250);
\draw[gridc] (3.500,4.000)--(3.750,3.750);
\draw[gridc] (3.500,4.500)--(3.750,4.250);
\draw[gridc] (3.750,0.250)--(4.000,0.500);
\draw[gridc] (3.750,0.750)--(4.000,1.000);
\draw[gridc] (3.750,0.750)--(4.000,0.500);
\draw[gridc] (3.750,1.250)--(4.000,1.500);
\draw[gridc] (3.750,1.250)--(4.000,1.000);
\draw[gridc] (3.750,1.750)--(4.000,2.000);
\draw[gridc] (3.750,1.750)--(4.000,1.500);
\draw[gridc] (3.750,2.250)--(4.000,2.500);
\draw[gridc] (3.750,2.250)--(4.000,2.000);
\draw[gridc] (3.750,2.750)--(4.000,3.000);
\draw[gridc] (3.750,2.750)--(4.000,2.500);
\draw[gridc] (3.750,3.250)--(4.000,3.500);
\draw[gridc] (3.750,3.250)--(4.000,3.000);
\draw[gridc] (3.750,3.750)--(4.000,4.000);
\draw[gridc] (3.750,3.750)--(4.000,3.500);
\draw[gridc] (3.750,4.250)--(4.000,4.500);
\draw[gridc] (3.750,4.250)--(4.000,4.000);
\draw[gridc] (4.000,0.500)--(4.250,0.750);
\draw[gridc] (4.000,0.500)--(4.250,0.250);
\draw[gridc] (4.000,1.000)--(4.250,1.250);
\draw[gridc] (4.000,1.000)--(4.250,0.750);
\draw[gridc] (4.000,1.500)--(4.250,1.750);
\draw[gridc] (4.000,1.500)--(4.250,1.250);
\draw[gridc] (4.000,2.000)--(4.250,2.250);
\draw[gridc] (4.000,2.000)--(4.250,1.750);
\draw[gridc] (4.000,2.500)--(4.250,2.750);
\draw[gridc] (4.000,2.500)--(4.250,2.250);
\draw[gridc] (4.000,3.000)--(4.250,3.250);
\draw[gridc] (4.000,3.000)--(4.250,2.750);
\draw[gridc] (4.000,3.500)--(4.250,3.750);
\draw[gridc] (4.000,3.500)--(4.250,3.250);
\draw[gridc] (4.000,4.000)--(4.250,4.250);
\draw[gridc] (4.000,4.000)--(4.250,3.750);
\draw[gridc] (4.000,4.500)--(4.250,4.250);
\draw[gridc] (4.250,0.250)--(4.500,0.500);
\draw[gridc] (4.250,0.750)--(4.500,1.000);
\draw[gridc] (4.250,0.750)--(4.500,0.500);
\draw[gridc] (4.250,1.250)--(4.500,1.500);
\draw[gridc] (4.250,1.250)--(4.500,1.000);
\draw[gridc] (4.250,1.750)--(4.500,2.000);
\draw[gridc] (4.250,1.750)--(4.500,1.500);
\draw[gridc] (4.250,2.250)--(4.500,2.500);
\draw[gridc] (4.250,2.250)--(4.500,2.000);
\draw[gridc] (4.250,2.750)--(4.500,3.000);
\draw[gridc] (4.250,2.750)--(4.500,2.500);
\draw[gridc] (4.250,3.250)--(4.500,3.500);
\draw[gridc] (4.250,3.250)--(4.500,3.000);
\draw[gridc] (4.250,3.750)--(4.500,4.000);
\draw[gridc] (4.250,3.750)--(4.500,3.500);
\draw[gridc] (4.250,4.250)--(4.500,4.500);
\draw[gridc] (4.250,4.250)--(4.500,4.000);
\fill[vtx] (0.000,0.500) circle (0.028);
\fill[vtx] (0.000,1.000) circle (0.028);
\fill[vtx] (0.000,1.500) circle (0.028);
\fill[vtx] (0.000,2.000) circle (0.028);
\fill[vtx] (0.000,2.500) circle (0.028);
\fill[vtx] (0.000,3.000) circle (0.028);
\fill[vtx] (0.000,3.500) circle (0.028);
\fill[vtx] (0.000,4.000) circle (0.028);
\fill[vtx] (0.000,4.500) circle (0.028);
\fill[vtx] (0.250,0.250) circle (0.028);
\fill[vtx] (0.250,0.750) circle (0.028);
\fill[vtx] (0.250,1.250) circle (0.028);
\fill[vtx] (0.250,1.750) circle (0.028);
\fill[vtx] (0.250,2.250) circle (0.028);
\fill[vtx] (0.250,2.750) circle (0.028);
\fill[vtx] (0.250,3.250) circle (0.028);
\fill[vtx] (0.250,3.750) circle (0.028);
\fill[vtx] (0.250,4.250) circle (0.028);
\fill[vtx] (0.500,0.500) circle (0.028);
\fill[vtx] (0.500,1.000) circle (0.028);
\fill[vtx] (0.500,1.500) circle (0.028);
\fill[vtx] (0.500,2.000) circle (0.028);
\fill[vtx] (0.500,2.500) circle (0.028);
\fill[vtx] (0.500,3.000) circle (0.028);
\fill[vtx] (0.500,3.500) circle (0.028);
\fill[vtx] (0.500,4.000) circle (0.028);
\fill[vtx] (0.500,4.500) circle (0.028);
\fill[vtx] (0.750,0.250) circle (0.028);
\fill[vtx] (0.750,0.750) circle (0.028);
\fill[vtx] (0.750,1.250) circle (0.028);
\fill[vtx] (0.750,1.750) circle (0.028);
\fill[vtx] (0.750,2.250) circle (0.028);
\fill[vtx] (0.750,2.750) circle (0.028);
\fill[vtx] (0.750,3.250) circle (0.028);
\fill[vtx] (0.750,3.750) circle (0.028);
\fill[vtx] (0.750,4.250) circle (0.028);
\fill[vtx] (1.000,0.500) circle (0.028);
\fill[vtx] (1.000,1.000) circle (0.028);
\fill[vtx] (1.000,1.500) circle (0.028);
\fill[vtx] (1.000,2.000) circle (0.028);
\fill[vtx] (1.000,2.500) circle (0.028);
\fill[vtx] (1.000,3.000) circle (0.028);
\fill[vtx] (1.000,3.500) circle (0.028);
\fill[vtx] (1.000,4.000) circle (0.028);
\fill[vtx] (1.000,4.500) circle (0.028);
\fill[vtx] (1.250,0.250) circle (0.028);
\fill[vtx] (1.250,0.750) circle (0.028);
\fill[vtx] (1.250,1.250) circle (0.028);
\fill[vtx] (1.250,1.750) circle (0.028);
\fill[vtx] (1.250,2.250) circle (0.028);
\fill[vtx] (1.250,2.750) circle (0.028);
\fill[vtx] (1.250,3.250) circle (0.028);
\fill[vtx] (1.250,3.750) circle (0.028);
\fill[vtx] (1.250,4.250) circle (0.028);
\fill[vtx] (1.500,0.500) circle (0.028);
\fill[vtx] (1.500,1.000) circle (0.028);
\fill[vtx] (1.500,1.500) circle (0.028);
\fill[vtx] (1.500,2.000) circle (0.028);
\fill[vtx] (1.500,2.500) circle (0.028);
\fill[vtx] (1.500,3.000) circle (0.028);
\fill[vtx] (1.500,3.500) circle (0.028);
\fill[vtx] (1.500,4.000) circle (0.028);
\fill[vtx] (1.500,4.500) circle (0.028);
\fill[vtx] (1.750,0.250) circle (0.028);
\fill[vtx] (1.750,0.750) circle (0.028);
\fill[vtx] (1.750,1.250) circle (0.028);
\fill[vtx] (1.750,1.750) circle (0.028);
\fill[vtx] (1.750,2.250) circle (0.028);
\fill[vtx] (1.750,2.750) circle (0.028);
\fill[vtx] (1.750,3.250) circle (0.028);
\fill[vtx] (1.750,3.750) circle (0.028);
\fill[vtx] (1.750,4.250) circle (0.028);
\fill[vtx] (2.000,0.500) circle (0.028);
\fill[vtx] (2.000,1.000) circle (0.028);
\fill[vtx] (2.000,1.500) circle (0.028);
\fill[vtx] (2.000,2.000) circle (0.028);
\fill[vtx] (2.000,2.500) circle (0.028);
\fill[vtx] (2.000,3.000) circle (0.028);
\fill[vtx] (2.000,3.500) circle (0.028);
\fill[vtx] (2.000,4.000) circle (0.028);
\fill[vtx] (2.000,4.500) circle (0.028);
\fill[vtx] (2.250,0.250) circle (0.028);
\fill[vtx] (2.250,0.750) circle (0.028);
\fill[vtx] (2.250,1.250) circle (0.028);
\fill[vtx] (2.250,1.750) circle (0.028);
\fill[vtx] (2.250,2.250) circle (0.028);
\fill[vtx] (2.250,2.750) circle (0.028);
\fill[vtx] (2.250,3.250) circle (0.028);
\fill[vtx] (2.250,3.750) circle (0.028);
\fill[vtx] (2.250,4.250) circle (0.028);
\fill[vtx] (2.500,0.500) circle (0.028);
\fill[vtx] (2.500,1.000) circle (0.028);
\fill[vtx] (2.500,1.500) circle (0.028);
\fill[vtx] (2.500,2.000) circle (0.028);
\fill[vtx] (2.500,2.500) circle (0.028);
\fill[vtx] (2.500,3.000) circle (0.028);
\fill[vtx] (2.500,3.500) circle (0.028);
\fill[vtx] (2.500,4.000) circle (0.028);
\fill[vtx] (2.500,4.500) circle (0.028);
\fill[vtx] (2.750,0.250) circle (0.028);
\fill[vtx] (2.750,0.750) circle (0.028);
\fill[vtx] (2.750,1.250) circle (0.028);
\fill[vtx] (2.750,1.750) circle (0.028);
\fill[vtx] (2.750,2.250) circle (0.028);
\fill[vtx] (2.750,2.750) circle (0.028);
\fill[vtx] (2.750,3.250) circle (0.028);
\fill[vtx] (2.750,3.750) circle (0.028);
\fill[vtx] (2.750,4.250) circle (0.028);
\fill[vtx] (3.000,0.500) circle (0.028);
\fill[vtx] (3.000,1.000) circle (0.028);
\fill[vtx] (3.000,1.500) circle (0.028);
\fill[vtx] (3.000,2.000) circle (0.028);
\fill[vtx] (3.000,2.500) circle (0.028);
\fill[vtx] (3.000,3.000) circle (0.028);
\fill[vtx] (3.000,3.500) circle (0.028);
\fill[vtx] (3.000,4.000) circle (0.028);
\fill[vtx] (3.000,4.500) circle (0.028);
\fill[vtx] (3.250,0.250) circle (0.028);
\fill[vtx] (3.250,0.750) circle (0.028);
\fill[vtx] (3.250,1.250) circle (0.028);
\fill[vtx] (3.250,1.750) circle (0.028);
\fill[vtx] (3.250,2.250) circle (0.028);
\fill[vtx] (3.250,2.750) circle (0.028);
\fill[vtx] (3.250,3.250) circle (0.028);
\fill[vtx] (3.250,3.750) circle (0.028);
\fill[vtx] (3.250,4.250) circle (0.028);
\fill[vtx] (3.500,0.500) circle (0.028);
\fill[vtx] (3.500,1.000) circle (0.028);
\fill[vtx] (3.500,1.500) circle (0.028);
\fill[vtx] (3.500,2.000) circle (0.028);
\fill[vtx] (3.500,2.500) circle (0.028);
\fill[vtx] (3.500,3.000) circle (0.028);
\fill[vtx] (3.500,3.500) circle (0.028);
\fill[vtx] (3.500,4.000) circle (0.028);
\fill[vtx] (3.500,4.500) circle (0.028);
\fill[vtx] (3.750,0.250) circle (0.028);
\fill[vtx] (3.750,0.750) circle (0.028);
\fill[vtx] (3.750,1.250) circle (0.028);
\fill[vtx] (3.750,1.750) circle (0.028);
\fill[vtx] (3.750,2.250) circle (0.028);
\fill[vtx] (3.750,2.750) circle (0.028);
\fill[vtx] (3.750,3.250) circle (0.028);
\fill[vtx] (3.750,3.750) circle (0.028);
\fill[vtx] (3.750,4.250) circle (0.028);
\fill[vtx] (4.000,0.500) circle (0.028);
\fill[vtx] (4.000,1.000) circle (0.028);
\fill[vtx] (4.000,1.500) circle (0.028);
\fill[vtx] (4.000,2.000) circle (0.028);
\fill[vtx] (4.000,2.500) circle (0.028);
\fill[vtx] (4.000,3.000) circle (0.028);
\fill[vtx] (4.000,3.500) circle (0.028);
\fill[vtx] (4.000,4.000) circle (0.028);
\fill[vtx] (4.000,4.500) circle (0.028);
\fill[vtx] (4.250,0.250) circle (0.028);
\fill[vtx] (4.250,0.750) circle (0.028);
\fill[vtx] (4.250,1.250) circle (0.028);
\fill[vtx] (4.250,1.750) circle (0.028);
\fill[vtx] (4.250,2.250) circle (0.028);
\fill[vtx] (4.250,2.750) circle (0.028);
\fill[vtx] (4.250,3.250) circle (0.028);
\fill[vtx] (4.250,3.750) circle (0.028);
\fill[vtx] (4.250,4.250) circle (0.028);
\fill[vtx] (4.500,0.500) circle (0.028);
\fill[vtx] (4.500,1.000) circle (0.028);
\fill[vtx] (4.500,1.500) circle (0.028);
\fill[vtx] (4.500,2.000) circle (0.028);
\fill[vtx] (4.500,2.500) circle (0.028);
\fill[vtx] (4.500,3.000) circle (0.028);
\fill[vtx] (4.500,3.500) circle (0.028);
\fill[vtx] (4.500,4.000) circle (0.028);
\fill[vtx] (4.500,4.500) circle (0.028);
\fill[codec] (1.500,3.000) circle (0.075);
\fill[codec] (3.000,1.000) circle (0.075);
\fill[codec] (0.750,0.250) circle (0.075);
\fill[codec] (4.250,2.250) circle (0.075);
\fill[codec] (1.000,4.500) circle (0.075);
\fill[codec] (2.000,1.500) circle (0.075);
\fill[codec] (3.250,2.750) circle (0.075);
\fill[codec] (2.750,4.250) circle (0.075);
\fill[codec] (0.500,3.500) circle (0.075);
\fill[codec] (3.750,1.250) circle (0.075);
\fill[codec] (1.500,0.500) circle (0.075);
\fill[codec] (4.500,4.000) circle (0.075);
\fill[codec] (1.000,2.000) circle (0.075);
\fill[codec] (3.500,4.500) circle (0.075);
\fill[codec] (1.250,3.750) circle (0.075);
\fill[codec] (0.500,1.000) circle (0.075);
\fill[codec] (2.750,1.750) circle (0.075);
\fill[codec] (2.250,3.250) circle (0.075);
\fill[codec] (3.250,0.250) circle (0.075);
\fill[codec] (4.000,3.000) circle (0.075);
\fill[codec] (0.250,4.250) circle (0.075);
\fill[codec] (1.750,2.250) circle (0.075);
\fill[codec] (4.500,1.500) circle (0.075);
\fill[codec] (3.000,3.500) circle (0.075);
\fill[codec] (0.750,2.750) circle (0.075);
\fill[codec] (3.500,2.000) circle (0.075);
\fill[codec] (1.250,1.250) circle (0.075);
\fill[codec] (2.250,0.750) circle (0.075);
\fill[codec] (2.000,4.000) circle (0.075);
\fill[codec] (2.500,2.500) circle (0.075);
\fill[codec] (0.250,1.750) circle (0.075);
\fill[codec] (4.000,0.500) circle (0.075);
\fill[codec] (3.750,3.750) circle (0.075);
\draw[delc,thick] (0.000,2.500) circle (0.09);
\draw[delc,thick] (4.250,2.250) circle (0.12);
\draw[seam,dashed,thick] (-0.125,0)--(-0.125,4.750);
\draw[seam,dashed,thick] (4.625,0)--(4.625,4.750);
\draw[->] (0,-0.350)--(4.850,-0.350) node[right]{\scriptsize $s$};
\draw[->] (-0.400,0)--(-0.400,4.850) node[above]{\scriptsize $g$};
\node[below] at (0,-0.35) {\tiny $0$};
\node[below] at (4.500,-0.35) {\tiny $n-1$};
\node[left] at (-0.4,0.250) {\tiny $1$};
\node[left] at (-0.4,4.500) {\tiny $n-1$};
\end{tikzpicture}
\caption{The set $S_1$ for $n=19$ inside $D$, in the coordinates $(s,g)$ (blue). The dashed lines are the two sides of the seam, which are identified by $\sigma$. The two circles mark the only point of the lattice in the discarded column $s=0$ and the point of $S_1$ with which it would collide across the seam.}
\label{fig:S19}
\end{figure}

\begin{lemma}\label{lem:packing}
For $\varepsilon\in\{0,1\}$, $S_\varepsilon$ is a packing set of $F_2(C_n)$.
\end{lemma}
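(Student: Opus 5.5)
The plan is to reduce everything to the planar model of Section~\ref{sec:model}. Let $z\neq z'$ be points of $S_\varepsilon$. By Lemma~\ref{lem:model}(ii),
\[
d_{F_2(C_n)}\bigl(\pi(z),\pi(z')\bigr)\ \ge\ \min_{k\in\Z}\|z-\sigma^k(z')\|,
\]
so it suffices to show $\|z-\sigma^k(z')\|\ge 3$ for every $k\in\Z$. I will split according to $k$, using only the first coordinate $s(\sigma^k z')=s(z')+kn$ together with the column restriction $\varepsilon\le s\le n-2+\varepsilon$.

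For $k=0$ this is Lemma~\ref{lem:code}, since both points lie in $L_{2n+2\varepsilon}$. For $|k|\ge 2$, the first coordinates of $z$ and $z'$ differ by at most $n-2$. Hence
\[
|s(z)-s(\sigma^k z')|\ \ge\ |k|n-(n-2)\ \ge\ n+2\ \ge\ 3 .
\]
For $k=1$ the same estimate only gives $|s(z)-s(\sigma z')|\ge n-(n-2)=2$. So the single dangerous case is equality: $s(z)=n-2+\varepsilon$ and $s(z')=\varepsilon$. This is the points lying in the two extreme columns on either side of the seam. The case $k=-1$ is the same after exchanging the roles of $z$ and $z'$.

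The main step, and the only real obstacle, is this seam case. Here $\sigma(z')=(n+\varepsilon,\,n-g(z'))$, so it suffices to show $|g(z)-(n-g(z'))|\ge 3$. I will compute both $g$-coordinates modulo $10$ from the defining congruence $g+3s\equiv 2n+2\varepsilon \pmod{10}$:
\[
g(z)\equiv 2n+2\varepsilon-3(n-2+\varepsilon)=-n+6-\varepsilon,\qquad g(z')\equiv 2n-\varepsilon .
\]
Therefore
\[
g(z)-\bigl(n-g(z')\bigr)\equiv -n+6-\varepsilon-n+2n-\varepsilon=6-2\varepsilon\pmod{10},
\]
which is $6$ or $4$ modulo $10$. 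This difference is even, because the two points have first coordinates of the same parity and hence, by $s\equiv g\pmod 2$, second coordinates of the same parity. An even integer congruent to $4$ or $6$ modulo $10$ is never $0$ or $\pm2$, so its absolute value is at least $4$. Hence $\|z-\sigma(z')\|\ge 4\ge 3$ in the seam case.

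Combining the cases gives $\min_k\|z-\sigma^k z'\|\ge 3$, so $d_{F_2(C_n)}(\pi(z),\pi(z'))\ge 3$. Since $\pi$ is injective on $S_\varepsilon$ (these columns lie in $D$), $S_\varepsilon$ is a packing set.
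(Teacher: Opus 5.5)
Your proof is correct and is essentially the paper's argument: reduce via Lemma~\ref{lem:model}(ii), use Lemma~\ref{lem:code} for $k=0$, the column-width bound for $|k|\ge 2$, symmetry to pass from $k=-1$ to $k=1$, and in the seam case the congruence $g(z)+g(z')-n\equiv 6-2\varepsilon\pmod{10}$. Your parity remark is harmless but redundant, since any integer congruent to $4$ or $6$ modulo $10$ already has absolute value at least $4$.
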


\begin{proof}
Let $x\neq y$ be points of $S_\varepsilon$. By Lemma~\ref{lem:model}(ii) it suffices to show that $\|x-\sigma^k(y)\|\ge 3$ for every $k\in\Z$. For $k=0$ this is Lemma~\ref{lem:code}. For $|k|\ge 2$ we have $|s(x)-s(\sigma^k y)|=|s(x)-s(y)-kn|\ge 2n-(n-2)>2$. Since $\|x-\sigma^{-1}(y)\|=\|y-\sigma(x)\|$, it remains to consider $k=1$ for every ordered pair $(x,y)$. Now
\[
s(\sigma y)-s(x)=n+s(y)-s(x)\ \ge\ n-(n-2)=2,
\]
with equality only if $s(y)=\varepsilon$ and $s(x)=n-2+\varepsilon$. In that case, reducing modulo $10$ the congruence defining $L_{2n+2\varepsilon}$,
\begin{align*}
g(x)&\equiv 2n+2\varepsilon-3(n-2+\varepsilon)=6-n-\varepsilon,\\
 g(y)&\equiv 2n+2\varepsilon-3\varepsilon=2n-\varepsilon,
\end{align*}
so that \[
g(x)-g(\sigma y)=g(x)+g(y)-n\equiv 6-2\varepsilon\pmod{10}.
\]

As $6-2\varepsilon\in\{4,6\}$, we get $|g(x)-g(\sigma y)|\ge 4$. In all cases $\|x-\sigma(y)\|\ge 3$.
\end{proof}

\begin{lemma}\label{lem:count}
Write $n-1=10q+t$ with $q\ge 0$ and $0\le t\le 9$. Then
\[
|S_\varepsilon|=q(10q+2t)+E_\varepsilon(t),
\]
where
\[
E_\varepsilon(t)=\#\bigl\{(i,j)\in\{1,\dots,t\}^2:\ i+3j\equiv 2t+5-\varepsilon\pmod{10}\bigr\}.
\]
\end{lemma}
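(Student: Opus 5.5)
The plan is to count lattice points directly. By definition \eqref{eq:S}, $S_\varepsilon$ is the set of $(s,g)\in\Z^2$ with $\varepsilon\le s\le n-2+\varepsilon$, $1\le g\le n-1$ and $g+3s\equiv 2n+2\varepsilon\pmod{10}$. The parity condition $s\equiv g\pmod 2$ needs no separate treatment, since it follows from the congruence, as noted before Lemma~\ref{lem:code}. So $|S_\varepsilon|$ is the number of integer points of an $(n-1)\times(n-1)$ box that lie in a residue class of the linear form $g+3s$ modulo $10$. With $n-1=10q+t$, I will cut the range of $s$ into $q$ blocks of $10$ consecutive values followed by a final block of $t$ values, and cut the range of $g$ the same way. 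This partitions the box into $q^2$ squares of size $10\times 10$, $2q$ rectangles of size $10\times t$ or $t\times 10$, and one $t\times t$ corner.

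The key fact is that $3$ is invertible modulo $10$. Hence for each fixed $g$ there is exactly one residue of $s$ modulo $10$ satisfying the congruence, and for each fixed $s$ there is exactly one residue of $g$ modulo $10$. Consequently:
\begin{itemize}
\item a rectangle spanning $10$ consecutive values of $s$ contains exactly one solution in each of its rows, so a $10\times t$ rectangle (10 values of $s$, $t$ values of $g$) contains $t$ points;
\item symmetrically, a rectangle spanning $10$ consecutive values of $g$ contains one solution in each of its columns, so a $t\times 10$ rectangle contains $t$ points;
\item each $10\times10$ square contains $10$ points.
\end{itemize}
These pieces contribute $10q^2+2qt=q(10q+2t)$ in total.

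It remains to identify the corner count with $E_\varepsilon(t)$. I place the corner at the top end of both ranges and parametrize it by $g=10q+i$ and $s=\varepsilon+10q+j-1$ with $1\le i,j\le t$. Then
\[
g+3s\equiv i+3j+3\varepsilon-3\pmod{10}.
\]
Using $n=10q+t+1$, we have $2n+2\varepsilon\equiv 2t+2+2\varepsilon\pmod{10}$. The condition therefore becomes
\[
i+3j\equiv 2t+5-\varepsilon\pmod{10},
\]
which is exactly the condition defining $E_\varepsilon(t)$. Adding this to $q(10q+2t)$ gives the claimed formula.

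The only step needing care is this final bookkeeping: the offsets $\varepsilon$ and $-1$ in the parametrization of $s$, and the reduction of $2n$ modulo $10$, must be tracked correctly so that the right-hand side comes out as $2t+5-\varepsilon$. I will also check the degenerate cases $q=0$ and $t=0$, in which some of the blocks are empty and the argument goes through unchanged.
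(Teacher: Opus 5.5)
Your proposal is correct and is essentially the paper's argument. The paper also counts solutions of the congruence in the $(n-1)\times(n-1)$ box, using the global reparametrization $(i,j)=(g,\,s+1-\varepsilon)$. It splits the range into $\{1,\dots,10q\}$ and a final block of length $t$, uses the invertibility of $3$ modulo $10$ to obtain $q(n-1)+qt=q(10q+2t)$, and identifies the $t\times t$ corner with $E_\varepsilon(t)$ exactly as your shift does. Your grouping into $q^2$ squares and $2q$ rectangles is the same count with finer bookkeeping, and your reduction to $i+3j\equiv 2t+5-\varepsilon \pmod{10}$ is correct.
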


\begin{proof}
Put $m=n-1$. The map $(s,g)\mapsto(i,j)=(g,\,s+1-\varepsilon)$ is a bijection from $S_\varepsilon$ onto the set of pairs $(i,j)\in\{1,\dots,m\}^2$ with $i+3j\equiv c\pmod{10}$, where $c=2n+3-\varepsilon\equiv 2t+5-\varepsilon$. Split $\{1,\dots,m\}=I\cup J$ with $I=\{1,\dots,10q\}$ and $J=\{10q+1,\dots,10q+t\}$. Since $I$ contains exactly $q$ elements of each residue class modulo $10$ and $3$ is invertible modulo $10$, there are exactly $qm$ admissible pairs with $i\in I$, and exactly $qt$ with $i\in J$ and $j\in I$. Finally, the admissible pairs with $i,j\in J$ correspond, after subtracting $10q$ from both coordinates, to those counted by $E_\varepsilon(t)$. Hence $|S_\varepsilon|=qm+qt+E_\varepsilon(t)$.
\end{proof}

\begin{table}[ht]
\centering
\begin{tabular}{lcccccccccc}
\toprule
$t$ & 0 & 1 & 2 & 3 & 4 & 5 & 6 & 7 & 8 & 9\\
\midrule
$E_0(t)$ & 0 & 0 & 0 & 1 & 2 & 2 & 4 & 5 & 6 & 8\\
$E_1(t)$ & 0 & 0 & 1 & 1 & 1 & 3 & 3 & 5 & 7 & 8\\
$\lceil (t^2-1)/10\rceil$ & 0 & 0 & 1 & 1 & 2 & 3 & 4 & 5 & 7 & 8\\
\bottomrule
\end{tabular}
\caption{The numbers $E_0(t)$ and $E_1(t)$ of Lemma~\ref{lem:count}.}
\label{tab:E}
\end{table}

\begin{corollary}\label{cor:count}
For every $n\ge 3$, $\max\{|S_0|,|S_1|\}=\lceil n(n-2)/10\rceil$. Moreover, if $n\equiv 0,2\pmod{10}$, then $|S_0|=n(n-2)/10$.
\end{corollary}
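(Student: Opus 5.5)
The plan is to reduce the corollary to Lemma~\ref{lem:count} and Table~\ref{tab:E}. Write $n-1=m=10q+t$ with $0\le t\le 9$. Then $n(n-2)=m^2-1=100q^2+20qt+t^2-1$, so
\[
\frac{n(n-2)}{10}=q(10q+2t)+\frac{t^2-1}{10}.
\]
Since $q(10q+2t)$ is an integer, we get $\lceil n(n-2)/10\rceil=q(10q+2t)+\lceil (t^2-1)/10\rceil$. Comparing with Lemma~\ref{lem:count}, the first claim becomes
\[
\max\{E_0(t),E_1(t)\}=\lceil (t^2-1)/10\rceil \qquad\text{for each } t\in\{0,\dots,9\}.
\]
This is a finite check, which reads off from the three rows of Table~\ref{tab:E}.

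The step needing justification is the table itself, namely computing $E_\varepsilon(t)$ for $t\le 9$ and $\varepsilon\in\{0,1\}$. For this, fix $t$ and the residue $c=2t+5-\varepsilon \pmod{10}$. For each $j\in\{1,\dots,t\}$ the value $i\equiv c-3j\pmod{10}$ is determined, since $i\in\{1,\dots,t\}\subseteq\{1,\dots,9\}$. So $E_\varepsilon(t)$ counts those $j\le t$ with $(c-3j)\bmod 10\in\{1,\dots,t\}$. This is twenty short enumerations. I would present a couple explicitly, say $t=8$: with $\varepsilon=1$ we have $c=0$, and the values $-3j \bmod 10$ for $j=1,\dots,8$ are $7,4,1,8,5,2,9,6$, of which $7$ lie in $\{1,\dots,8\}$. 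I would state the rest as routine. This bookkeeping is the only real obstacle, and it is entirely mechanical.

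For the second claim, $n\equiv 0,2\pmod{10}$ means $m\equiv 9,1\pmod{10}$, so $t\in\{1,9\}$. In both cases $t^2-1\equiv 0\pmod{10}$, so $n(n-2)/10=q(10q+2t)+(t^2-1)/10$ is an integer. Table~\ref{tab:E} gives $E_0(1)=0=(1-1)/10$ and $E_0(9)=8=(81-1)/10$, hence $|S_0|=n(n-2)/10$. The restriction $n\ge 3$ only guarantees $m\ge 2$, so that the sets are meaningful; the formula of Lemma~\ref{lem:count} holds for all $q\ge 0$, and no small cases need separate treatment.
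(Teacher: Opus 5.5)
Your proposal is correct and is essentially the paper's proof: you rewrite $n(n-2)=10q(10q+2t)+t^2-1$ and use Lemma~\ref{lem:count} to reduce the first claim to the finite check $\max\{E_0(t),E_1(t)\}=\lceil (t^2-1)/10\rceil$ from Table~\ref{tab:E}, and you obtain the second claim from $t\in\{1,9\}$, where $E_0(t)=(t^2-1)/10$. Your sample enumeration $E_1(8)=7$ is right, and the other table entries you cite also check out.
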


\begin{proof}
With $n-1=10q+t$ we have $n(n-2)=(n-1)^2-1=10q(10q+2t)+t^2-1$, hence
\[
\left\lceil\frac{n(n-2)}{10}\right\rceil=q(10q+2t)+\left\lceil\frac{t^2-1}{10}\right\rceil .
\]
The values of $E_\varepsilon(t)$, computed directly from the definition, are listed in Table~\ref{tab:E}; in every column $\max\{E_0(t),E_1(t)\}=\lceil(t^2-1)/10\rceil$, and $E_0(t)=\lceil(t^2-1)/10\rceil$ for $t\in\{1,9\}$, that is, for $n\equiv 2,0\pmod{10}$. The claims follow from Lemma~\ref{lem:count}.
\end{proof}

By Lemma~\ref{lem:packing} and Corollary~\ref{cor:count}, $\rho(F_2(C_n))\ge\lceil n(n-2)/10\rceil$ for every $n\ge 3$. If $n\not\equiv 0,2\pmod{10}$, then $n(n-2)/10\notin\Z$ and $\lceil n(n-2)/10\rceil=\lfloor n(n-2)/10\rfloor+1$, which proves Theorem~\ref{maintheorem} in this case.

\subsection{The case \texorpdfstring{$n\equiv 0,2\pmod{10}$}{n = 0,2 mod 10}}\label{sec:plus}

Assume now that $n\ge 10$ and $n\equiv 0,2\pmod{10}$. We gain one vertex by rearranging $S_0$ near the boundary of the Möbius band. Let $\beta=3$ if $n\equiv 0$ and $\beta=1$ if $n\equiv 2\pmod{10}$. Solving $g+3s\equiv 2n\pmod{10}$ for $s$ gives $s\equiv 4n+3g\pmod{10}$, that is, the points of $L_{2n}$ in the row $g$ are those with
\begin{equation}\label{eq:rows}
s\equiv \beta+3(g-1)\pmod{10}.
\end{equation}
Define $R=\{z\in S_0:\ g(z)\ge 4\}$,
\begin{align*}
B={}&\{(s,1)\colon -1\le s\le n-2,\ s\equiv \beta+2 \text{ or } \beta+8 \pmod{10}\}\\
    &\cup\{(s,2)\colon 0\le s\le n-2,\ s\equiv \beta+5 \pmod{10}\},
\end{align*}
and $S^+=R\cup B$. All points of $S^+$ have $0\le s\le n-2$, except the point $(-1,1)$, which belongs to $B$ when $\beta=1$; it represents the vertex $\{n-1,0\}$, and $\sigma(-1,1)=(n-1,n-1)$ lies in the column $s=n-1$, which contains no point of $S^+$. Hence, by Lemma~\ref{lem:model}(i), $\pi$ is injective on $S^+$, and again we identify $S^+$ with $\pi(S^+)$.

In terms of pairs (with indices modulo $n$), $S^+$ is obtained from $S_0$ as follows: each vertex of $S_0$ in the rows $g=1,2,3$, which has the form $\{a,a+1\}$, $\{a,a+2\}$ or $\{a,a+3\}$, is replaced by $\{a+1,a+2\}$, $\{a+1,a+3\}$ or $\{a+2,a+3\}$, respectively (a translation by $2$ in the coordinate $s$), and the vertex $\{0,1\}$ if $n\equiv 0$, or $\{n-1,0\}$ if $n\equiv 2\pmod{10}$, is added.

\begin{example}\label{ex:20}
For $n=20$ we have $\beta=3$, and $S^+$ consists of the $37$ pairs
$\{0,1\}$, $\{0,5\}$, $\{0,10\}$, $\{0,15\}$, $\{1,7\}$, $\{1,12\}$, $\{1,17\}$, $\{2,3\}$, $\{2,9\}$, $\{2,14\}$, $\{3,5\}$, $\{3,11\}$, $\{3,19\}$, $\{4,8\}$, $\{4,13\}$, $\{4,17\}$, $\{5,6\}$, $\{5,10\}$, $\{5,15\}$, $\{6,12\}$, $\{6,18\}$, $\{7,8\}$, $\{7,16\}$, $\{8,10\}$, $\{8,14\}$, $\{8,19\}$, $\{9,12\}$, $\{9,17\}$, $\{10,15\}$, $\{11,13\}$, $\{11,18\}$, $\{12,16\}$, $\{13,14\}$, $\{13,19\}$, $\{14,17\}$, $\{16,18\}$, $\{18,19\}$;
see Figure~\ref{fig:S20}. The set $R$ has $31$ points, the $5$ points of $S_0$ in the rows $g\le 3$ are discarded, and $B$ has $6$ points. Here $20\cdot 18/10+1=37$, while $a(20)=36$.
\end{example}

\begin{figure}[ht]
\centering
\begin{tikzpicture}
\draw[gridc] (-0.250,0.250)--(0.000,0.500);
\draw[gridc] (-0.250,0.750)--(0.000,1.000);
\draw[gridc] (-0.250,0.750)--(0.000,0.500);
\draw[gridc] (-0.250,1.250)--(0.000,1.500);
\draw[gridc] (-0.250,1.250)--(0.000,1.000);
\draw[gridc] (-0.250,1.750)--(0.000,2.000);
\draw[gridc] (-0.250,1.750)--(0.000,1.500);
\draw[gridc] (-0.250,2.250)--(0.000,2.500);
\draw[gridc] (-0.250,2.250)--(0.000,2.000);
\draw[gridc] (-0.250,2.750)--(0.000,3.000);
\draw[gridc] (-0.250,2.750)--(0.000,2.500);
\draw[gridc] (-0.250,3.250)--(0.000,3.500);
\draw[gridc] (-0.250,3.250)--(0.000,3.000);
\draw[gridc] (-0.250,3.750)--(0.000,4.000);
\draw[gridc] (-0.250,3.750)--(0.000,3.500);
\draw[gridc] (-0.250,4.250)--(0.000,4.500);
\draw[gridc] (-0.250,4.250)--(0.000,4.000);
\draw[gridc] (-0.250,4.750)--(0.000,4.500);
\draw[gridc] (0.000,0.500)--(0.250,0.750);
\draw[gridc] (0.000,0.500)--(0.250,0.250);
\draw[gridc] (0.000,1.000)--(0.250,1.250);
\draw[gridc] (0.000,1.000)--(0.250,0.750);
\draw[gridc] (0.000,1.500)--(0.250,1.750);
\draw[gridc] (0.000,1.500)--(0.250,1.250);
\draw[gridc] (0.000,2.000)--(0.250,2.250);
\draw[gridc] (0.000,2.000)--(0.250,1.750);
\draw[gridc] (0.000,2.500)--(0.250,2.750);
\draw[gridc] (0.000,2.500)--(0.250,2.250);
\draw[gridc] (0.000,3.000)--(0.250,3.250);
\draw[gridc] (0.000,3.000)--(0.250,2.750);
\draw[gridc] (0.000,3.500)--(0.250,3.750);
\draw[gridc] (0.000,3.500)--(0.250,3.250);
\draw[gridc] (0.000,4.000)--(0.250,4.250);
\draw[gridc] (0.000,4.000)--(0.250,3.750);
\draw[gridc] (0.000,4.500)--(0.250,4.750);
\draw[gridc] (0.000,4.500)--(0.250,4.250);
\draw[gridc] (0.250,0.250)--(0.500,0.500);
\draw[gridc] (0.250,0.750)--(0.500,1.000);
\draw[gridc] (0.250,0.750)--(0.500,0.500);
\draw[gridc] (0.250,1.250)--(0.500,1.500);
\draw[gridc] (0.250,1.250)--(0.500,1.000);
\draw[gridc] (0.250,1.750)--(0.500,2.000);
\draw[gridc] (0.250,1.750)--(0.500,1.500);
\draw[gridc] (0.250,2.250)--(0.500,2.500);
\draw[gridc] (0.250,2.250)--(0.500,2.000);
\draw[gridc] (0.250,2.750)--(0.500,3.000);
\draw[gridc] (0.250,2.750)--(0.500,2.500);
\draw[gridc] (0.250,3.250)--(0.500,3.500);
\draw[gridc] (0.250,3.250)--(0.500,3.000);
\draw[gridc] (0.250,3.750)--(0.500,4.000);
\draw[gridc] (0.250,3.750)--(0.500,3.500);
\draw[gridc] (0.250,4.250)--(0.500,4.500);
\draw[gridc] (0.250,4.250)--(0.500,4.000);
\draw[gridc] (0.250,4.750)--(0.500,4.500);
\draw[gridc] (0.500,0.500)--(0.750,0.750);
\draw[gridc] (0.500,0.500)--(0.750,0.250);
\draw[gridc] (0.500,1.000)--(0.750,1.250);
\draw[gridc] (0.500,1.000)--(0.750,0.750);
\draw[gridc] (0.500,1.500)--(0.750,1.750);
\draw[gridc] (0.500,1.500)--(0.750,1.250);
\draw[gridc] (0.500,2.000)--(0.750,2.250);
\draw[gridc] (0.500,2.000)--(0.750,1.750);
\draw[gridc] (0.500,2.500)--(0.750,2.750);
\draw[gridc] (0.500,2.500)--(0.750,2.250);
\draw[gridc] (0.500,3.000)--(0.750,3.250);
\draw[gridc] (0.500,3.000)--(0.750,2.750);
\draw[gridc] (0.500,3.500)--(0.750,3.750);
\draw[gridc] (0.500,3.500)--(0.750,3.250);
\draw[gridc] (0.500,4.000)--(0.750,4.250);
\draw[gridc] (0.500,4.000)--(0.750,3.750);
\draw[gridc] (0.500,4.500)--(0.750,4.750);
\draw[gridc] (0.500,4.500)--(0.750,4.250);
\draw[gridc] (0.750,0.250)--(1.000,0.500);
\draw[gridc] (0.750,0.750)--(1.000,1.000);
\draw[gridc] (0.750,0.750)--(1.000,0.500);
\draw[gridc] (0.750,1.250)--(1.000,1.500);
\draw[gridc] (0.750,1.250)--(1.000,1.000);
\draw[gridc] (0.750,1.750)--(1.000,2.000);
\draw[gridc] (0.750,1.750)--(1.000,1.500);
\draw[gridc] (0.750,2.250)--(1.000,2.500);
\draw[gridc] (0.750,2.250)--(1.000,2.000);
\draw[gridc] (0.750,2.750)--(1.000,3.000);
\draw[gridc] (0.750,2.750)--(1.000,2.500);
\draw[gridc] (0.750,3.250)--(1.000,3.500);
\draw[gridc] (0.750,3.250)--(1.000,3.000);
\draw[gridc] (0.750,3.750)--(1.000,4.000);
\draw[gridc] (0.750,3.750)--(1.000,3.500);
\draw[gridc] (0.750,4.250)--(1.000,4.500);
\draw[gridc] (0.750,4.250)--(1.000,4.000);
\draw[gridc] (0.750,4.750)--(1.000,4.500);
\draw[gridc] (1.000,0.500)--(1.250,0.750);
\draw[gridc] (1.000,0.500)--(1.250,0.250);
\draw[gridc] (1.000,1.000)--(1.250,1.250);
\draw[gridc] (1.000,1.000)--(1.250,0.750);
\draw[gridc] (1.000,1.500)--(1.250,1.750);
\draw[gridc] (1.000,1.500)--(1.250,1.250);
\draw[gridc] (1.000,2.000)--(1.250,2.250);
\draw[gridc] (1.000,2.000)--(1.250,1.750);
\draw[gridc] (1.000,2.500)--(1.250,2.750);
\draw[gridc] (1.000,2.500)--(1.250,2.250);
\draw[gridc] (1.000,3.000)--(1.250,3.250);
\draw[gridc] (1.000,3.000)--(1.250,2.750);
\draw[gridc] (1.000,3.500)--(1.250,3.750);
\draw[gridc] (1.000,3.500)--(1.250,3.250);
\draw[gridc] (1.000,4.000)--(1.250,4.250);
\draw[gridc] (1.000,4.000)--(1.250,3.750);
\draw[gridc] (1.000,4.500)--(1.250,4.750);
\draw[gridc] (1.000,4.500)--(1.250,4.250);
\draw[gridc] (1.250,0.250)--(1.500,0.500);
\draw[gridc] (1.250,0.750)--(1.500,1.000);
\draw[gridc] (1.250,0.750)--(1.500,0.500);
\draw[gridc] (1.250,1.250)--(1.500,1.500);
\draw[gridc] (1.250,1.250)--(1.500,1.000);
\draw[gridc] (1.250,1.750)--(1.500,2.000);
\draw[gridc] (1.250,1.750)--(1.500,1.500);
\draw[gridc] (1.250,2.250)--(1.500,2.500);
\draw[gridc] (1.250,2.250)--(1.500,2.000);
\draw[gridc] (1.250,2.750)--(1.500,3.000);
\draw[gridc] (1.250,2.750)--(1.500,2.500);
\draw[gridc] (1.250,3.250)--(1.500,3.500);
\draw[gridc] (1.250,3.250)--(1.500,3.000);
\draw[gridc] (1.250,3.750)--(1.500,4.000);
\draw[gridc] (1.250,3.750)--(1.500,3.500);
\draw[gridc] (1.250,4.250)--(1.500,4.500);
\draw[gridc] (1.250,4.250)--(1.500,4.000);
\draw[gridc] (1.250,4.750)--(1.500,4.500);
\draw[gridc] (1.500,0.500)--(1.750,0.750);
\draw[gridc] (1.500,0.500)--(1.750,0.250);
\draw[gridc] (1.500,1.000)--(1.750,1.250);
\draw[gridc] (1.500,1.000)--(1.750,0.750);
\draw[gridc] (1.500,1.500)--(1.750,1.750);
\draw[gridc] (1.500,1.500)--(1.750,1.250);
\draw[gridc] (1.500,2.000)--(1.750,2.250);
\draw[gridc] (1.500,2.000)--(1.750,1.750);
\draw[gridc] (1.500,2.500)--(1.750,2.750);
\draw[gridc] (1.500,2.500)--(1.750,2.250);
\draw[gridc] (1.500,3.000)--(1.750,3.250);
\draw[gridc] (1.500,3.000)--(1.750,2.750);
\draw[gridc] (1.500,3.500)--(1.750,3.750);
\draw[gridc] (1.500,3.500)--(1.750,3.250);
\draw[gridc] (1.500,4.000)--(1.750,4.250);
\draw[gridc] (1.500,4.000)--(1.750,3.750);
\draw[gridc] (1.500,4.500)--(1.750,4.750);
\draw[gridc] (1.500,4.500)--(1.750,4.250);
\draw[gridc] (1.750,0.250)--(2.000,0.500);
\draw[gridc] (1.750,0.750)--(2.000,1.000);
\draw[gridc] (1.750,0.750)--(2.000,0.500);
\draw[gridc] (1.750,1.250)--(2.000,1.500);
\draw[gridc] (1.750,1.250)--(2.000,1.000);
\draw[gridc] (1.750,1.750)--(2.000,2.000);
\draw[gridc] (1.750,1.750)--(2.000,1.500);
\draw[gridc] (1.750,2.250)--(2.000,2.500);
\draw[gridc] (1.750,2.250)--(2.000,2.000);
\draw[gridc] (1.750,2.750)--(2.000,3.000);
\draw[gridc] (1.750,2.750)--(2.000,2.500);
\draw[gridc] (1.750,3.250)--(2.000,3.500);
\draw[gridc] (1.750,3.250)--(2.000,3.000);
\draw[gridc] (1.750,3.750)--(2.000,4.000);
\draw[gridc] (1.750,3.750)--(2.000,3.500);
\draw[gridc] (1.750,4.250)--(2.000,4.500);
\draw[gridc] (1.750,4.250)--(2.000,4.000);
\draw[gridc] (1.750,4.750)--(2.000,4.500);
\draw[gridc] (2.000,0.500)--(2.250,0.750);
\draw[gridc] (2.000,0.500)--(2.250,0.250);
\draw[gridc] (2.000,1.000)--(2.250,1.250);
\draw[gridc] (2.000,1.000)--(2.250,0.750);
\draw[gridc] (2.000,1.500)--(2.250,1.750);
\draw[gridc] (2.000,1.500)--(2.250,1.250);
\draw[gridc] (2.000,2.000)--(2.250,2.250);
\draw[gridc] (2.000,2.000)--(2.250,1.750);
\draw[gridc] (2.000,2.500)--(2.250,2.750);
\draw[gridc] (2.000,2.500)--(2.250,2.250);
\draw[gridc] (2.000,3.000)--(2.250,3.250);
\draw[gridc] (2.000,3.000)--(2.250,2.750);
\draw[gridc] (2.000,3.500)--(2.250,3.750);
\draw[gridc] (2.000,3.500)--(2.250,3.250);
\draw[gridc] (2.000,4.000)--(2.250,4.250);
\draw[gridc] (2.000,4.000)--(2.250,3.750);
\draw[gridc] (2.000,4.500)--(2.250,4.750);
\draw[gridc] (2.000,4.500)--(2.250,4.250);
\draw[gridc] (2.250,0.250)--(2.500,0.500);
\draw[gridc] (2.250,0.750)--(2.500,1.000);
\draw[gridc] (2.250,0.750)--(2.500,0.500);
\draw[gridc] (2.250,1.250)--(2.500,1.500);
\draw[gridc] (2.250,1.250)--(2.500,1.000);
\draw[gridc] (2.250,1.750)--(2.500,2.000);
\draw[gridc] (2.250,1.750)--(2.500,1.500);
\draw[gridc] (2.250,2.250)--(2.500,2.500);
\draw[gridc] (2.250,2.250)--(2.500,2.000);
\draw[gridc] (2.250,2.750)--(2.500,3.000);
\draw[gridc] (2.250,2.750)--(2.500,2.500);
\draw[gridc] (2.250,3.250)--(2.500,3.500);
\draw[gridc] (2.250,3.250)--(2.500,3.000);
\draw[gridc] (2.250,3.750)--(2.500,4.000);
\draw[gridc] (2.250,3.750)--(2.500,3.500);
\draw[gridc] (2.250,4.250)--(2.500,4.500);
\draw[gridc] (2.250,4.250)--(2.500,4.000);
\draw[gridc] (2.250,4.750)--(2.500,4.500);
\draw[gridc] (2.500,0.500)--(2.750,0.750);
\draw[gridc] (2.500,0.500)--(2.750,0.250);
\draw[gridc] (2.500,1.000)--(2.750,1.250);
\draw[gridc] (2.500,1.000)--(2.750,0.750);
\draw[gridc] (2.500,1.500)--(2.750,1.750);
\draw[gridc] (2.500,1.500)--(2.750,1.250);
\draw[gridc] (2.500,2.000)--(2.750,2.250);
\draw[gridc] (2.500,2.000)--(2.750,1.750);
\draw[gridc] (2.500,2.500)--(2.750,2.750);
\draw[gridc] (2.500,2.500)--(2.750,2.250);
\draw[gridc] (2.500,3.000)--(2.750,3.250);
\draw[gridc] (2.500,3.000)--(2.750,2.750);
\draw[gridc] (2.500,3.500)--(2.750,3.750);
\draw[gridc] (2.500,3.500)--(2.750,3.250);
\draw[gridc] (2.500,4.000)--(2.750,4.250);
\draw[gridc] (2.500,4.000)--(2.750,3.750);
\draw[gridc] (2.500,4.500)--(2.750,4.750);
\draw[gridc] (2.500,4.500)--(2.750,4.250);
\draw[gridc] (2.750,0.250)--(3.000,0.500);
\draw[gridc] (2.750,0.750)--(3.000,1.000);
\draw[gridc] (2.750,0.750)--(3.000,0.500);
\draw[gridc] (2.750,1.250)--(3.000,1.500);
\draw[gridc] (2.750,1.250)--(3.000,1.000);
\draw[gridc] (2.750,1.750)--(3.000,2.000);
\draw[gridc] (2.750,1.750)--(3.000,1.500);
\draw[gridc] (2.750,2.250)--(3.000,2.500);
\draw[gridc] (2.750,2.250)--(3.000,2.000);
\draw[gridc] (2.750,2.750)--(3.000,3.000);
\draw[gridc] (2.750,2.750)--(3.000,2.500);
\draw[gridc] (2.750,3.250)--(3.000,3.500);
\draw[gridc] (2.750,3.250)--(3.000,3.000);
\draw[gridc] (2.750,3.750)--(3.000,4.000);
\draw[gridc] (2.750,3.750)--(3.000,3.500);
\draw[gridc] (2.750,4.250)--(3.000,4.500);
\draw[gridc] (2.750,4.250)--(3.000,4.000);
\draw[gridc] (2.750,4.750)--(3.000,4.500);
\draw[gridc] (3.000,0.500)--(3.250,0.750);
\draw[gridc] (3.000,0.500)--(3.250,0.250);
\draw[gridc] (3.000,1.000)--(3.250,1.250);
\draw[gridc] (3.000,1.000)--(3.250,0.750);
\draw[gridc] (3.000,1.500)--(3.250,1.750);
\draw[gridc] (3.000,1.500)--(3.250,1.250);
\draw[gridc] (3.000,2.000)--(3.250,2.250);
\draw[gridc] (3.000,2.000)--(3.250,1.750);
\draw[gridc] (3.000,2.500)--(3.250,2.750);
\draw[gridc] (3.000,2.500)--(3.250,2.250);
\draw[gridc] (3.000,3.000)--(3.250,3.250);
\draw[gridc] (3.000,3.000)--(3.250,2.750);
\draw[gridc] (3.000,3.500)--(3.250,3.750);
\draw[gridc] (3.000,3.500)--(3.250,3.250);
\draw[gridc] (3.000,4.000)--(3.250,4.250);
\draw[gridc] (3.000,4.000)--(3.250,3.750);
\draw[gridc] (3.000,4.500)--(3.250,4.750);
\draw[gridc] (3.000,4.500)--(3.250,4.250);
\draw[gridc] (3.250,0.250)--(3.500,0.500);
\draw[gridc] (3.250,0.750)--(3.500,1.000);
\draw[gridc] (3.250,0.750)--(3.500,0.500);
\draw[gridc] (3.250,1.250)--(3.500,1.500);
\draw[gridc] (3.250,1.250)--(3.500,1.000);
\draw[gridc] (3.250,1.750)--(3.500,2.000);
\draw[gridc] (3.250,1.750)--(3.500,1.500);
\draw[gridc] (3.250,2.250)--(3.500,2.500);
\draw[gridc] (3.250,2.250)--(3.500,2.000);
\draw[gridc] (3.250,2.750)--(3.500,3.000);
\draw[gridc] (3.250,2.750)--(3.500,2.500);
\draw[gridc] (3.250,3.250)--(3.500,3.500);
\draw[gridc] (3.250,3.250)--(3.500,3.000);
\draw[gridc] (3.250,3.750)--(3.500,4.000);
\draw[gridc] (3.250,3.750)--(3.500,3.500);
\draw[gridc] (3.250,4.250)--(3.500,4.500);
\draw[gridc] (3.250,4.250)--(3.500,4.000);
\draw[gridc] (3.250,4.750)--(3.500,4.500);
\draw[gridc] (3.500,0.500)--(3.750,0.750);
\draw[gridc] (3.500,0.500)--(3.750,0.250);
\draw[gridc] (3.500,1.000)--(3.750,1.250);
\draw[gridc] (3.500,1.000)--(3.750,0.750);
\draw[gridc] (3.500,1.500)--(3.750,1.750);
\draw[gridc] (3.500,1.500)--(3.750,1.250);
\draw[gridc] (3.500,2.000)--(3.750,2.250);
\draw[gridc] (3.500,2.000)--(3.750,1.750);
\draw[gridc] (3.500,2.500)--(3.750,2.750);
\draw[gridc] (3.500,2.500)--(3.750,2.250);
\draw[gridc] (3.500,3.000)--(3.750,3.250);
\draw[gridc] (3.500,3.000)--(3.750,2.750);
\draw[gridc] (3.500,3.500)--(3.750,3.750);
\draw[gridc] (3.500,3.500)--(3.750,3.250);
\draw[gridc] (3.500,4.000)--(3.750,4.250);
\draw[gridc] (3.500,4.000)--(3.750,3.750);
\draw[gridc] (3.500,4.500)--(3.750,4.750);
\draw[gridc] (3.500,4.500)--(3.750,4.250);
\draw[gridc] (3.750,0.250)--(4.000,0.500);
\draw[gridc] (3.750,0.750)--(4.000,1.000);
\draw[gridc] (3.750,0.750)--(4.000,0.500);
\draw[gridc] (3.750,1.250)--(4.000,1.500);
\draw[gridc] (3.750,1.250)--(4.000,1.000);
\draw[gridc] (3.750,1.750)--(4.000,2.000);
\draw[gridc] (3.750,1.750)--(4.000,1.500);
\draw[gridc] (3.750,2.250)--(4.000,2.500);
\draw[gridc] (3.750,2.250)--(4.000,2.000);
\draw[gridc] (3.750,2.750)--(4.000,3.000);
\draw[gridc] (3.750,2.750)--(4.000,2.500);
\draw[gridc] (3.750,3.250)--(4.000,3.500);
\draw[gridc] (3.750,3.250)--(4.000,3.000);
\draw[gridc] (3.750,3.750)--(4.000,4.000);
\draw[gridc] (3.750,3.750)--(4.000,3.500);
\draw[gridc] (3.750,4.250)--(4.000,4.500);
\draw[gridc] (3.750,4.250)--(4.000,4.000);
\draw[gridc] (3.750,4.750)--(4.000,4.500);
\draw[gridc] (4.000,0.500)--(4.250,0.750);
\draw[gridc] (4.000,0.500)--(4.250,0.250);
\draw[gridc] (4.000,1.000)--(4.250,1.250);
\draw[gridc] (4.000,1.000)--(4.250,0.750);
\draw[gridc] (4.000,1.500)--(4.250,1.750);
\draw[gridc] (4.000,1.500)--(4.250,1.250);
\draw[gridc] (4.000,2.000)--(4.250,2.250);
\draw[gridc] (4.000,2.000)--(4.250,1.750);
\draw[gridc] (4.000,2.500)--(4.250,2.750);
\draw[gridc] (4.000,2.500)--(4.250,2.250);
\draw[gridc] (4.000,3.000)--(4.250,3.250);
\draw[gridc] (4.000,3.000)--(4.250,2.750);
\draw[gridc] (4.000,3.500)--(4.250,3.750);
\draw[gridc] (4.000,3.500)--(4.250,3.250);
\draw[gridc] (4.000,4.000)--(4.250,4.250);
\draw[gridc] (4.000,4.000)--(4.250,3.750);
\draw[gridc] (4.000,4.500)--(4.250,4.750);
\draw[gridc] (4.000,4.500)--(4.250,4.250);
\draw[gridc] (4.250,0.250)--(4.500,0.500);
\draw[gridc] (4.250,0.750)--(4.500,1.000);
\draw[gridc] (4.250,0.750)--(4.500,0.500);
\draw[gridc] (4.250,1.250)--(4.500,1.500);
\draw[gridc] (4.250,1.250)--(4.500,1.000);
\draw[gridc] (4.250,1.750)--(4.500,2.000);
\draw[gridc] (4.250,1.750)--(4.500,1.500);
\draw[gridc] (4.250,2.250)--(4.500,2.500);
\draw[gridc] (4.250,2.250)--(4.500,2.000);
\draw[gridc] (4.250,2.750)--(4.500,3.000);
\draw[gridc] (4.250,2.750)--(4.500,2.500);
\draw[gridc] (4.250,3.250)--(4.500,3.500);
\draw[gridc] (4.250,3.250)--(4.500,3.000);
\draw[gridc] (4.250,3.750)--(4.500,4.000);
\draw[gridc] (4.250,3.750)--(4.500,3.500);
\draw[gridc] (4.250,4.250)--(4.500,4.500);
\draw[gridc] (4.250,4.250)--(4.500,4.000);
\draw[gridc] (4.250,4.750)--(4.500,4.500);
\draw[gridc] (4.500,0.500)--(4.750,0.750);
\draw[gridc] (4.500,0.500)--(4.750,0.250);
\draw[gridc] (4.500,1.000)--(4.750,1.250);
\draw[gridc] (4.500,1.000)--(4.750,0.750);
\draw[gridc] (4.500,1.500)--(4.750,1.750);
\draw[gridc] (4.500,1.500)--(4.750,1.250);
\draw[gridc] (4.500,2.000)--(4.750,2.250);
\draw[gridc] (4.500,2.000)--(4.750,1.750);
\draw[gridc] (4.500,2.500)--(4.750,2.750);
\draw[gridc] (4.500,2.500)--(4.750,2.250);
\draw[gridc] (4.500,3.000)--(4.750,3.250);
\draw[gridc] (4.500,3.000)--(4.750,2.750);
\draw[gridc] (4.500,3.500)--(4.750,3.750);
\draw[gridc] (4.500,3.500)--(4.750,3.250);
\draw[gridc] (4.500,4.000)--(4.750,4.250);
\draw[gridc] (4.500,4.000)--(4.750,3.750);
\draw[gridc] (4.500,4.500)--(4.750,4.750);
\draw[gridc] (4.500,4.500)--(4.750,4.250);
\fill[vtx] (-0.250,0.250) circle (0.028);
\fill[vtx] (-0.250,0.750) circle (0.028);
\fill[vtx] (-0.250,1.250) circle (0.028);
\fill[vtx] (-0.250,1.750) circle (0.028);
\fill[vtx] (-0.250,2.250) circle (0.028);
\fill[vtx] (-0.250,2.750) circle (0.028);
\fill[vtx] (-0.250,3.250) circle (0.028);
\fill[vtx] (-0.250,3.750) circle (0.028);
\fill[vtx] (-0.250,4.250) circle (0.028);
\fill[vtx] (-0.250,4.750) circle (0.028);
\fill[vtx] (0.000,0.500) circle (0.028);
\fill[vtx] (0.000,1.000) circle (0.028);
\fill[vtx] (0.000,1.500) circle (0.028);
\fill[vtx] (0.000,2.000) circle (0.028);
\fill[vtx] (0.000,2.500) circle (0.028);
\fill[vtx] (0.000,3.000) circle (0.028);
\fill[vtx] (0.000,3.500) circle (0.028);
\fill[vtx] (0.000,4.000) circle (0.028);
\fill[vtx] (0.000,4.500) circle (0.028);
\fill[vtx] (0.250,0.250) circle (0.028);
\fill[vtx] (0.250,0.750) circle (0.028);
\fill[vtx] (0.250,1.250) circle (0.028);
\fill[vtx] (0.250,1.750) circle (0.028);
\fill[vtx] (0.250,2.250) circle (0.028);
\fill[vtx] (0.250,2.750) circle (0.028);
\fill[vtx] (0.250,3.250) circle (0.028);
\fill[vtx] (0.250,3.750) circle (0.028);
\fill[vtx] (0.250,4.250) circle (0.028);
\fill[vtx] (0.250,4.750) circle (0.028);
\fill[vtx] (0.500,0.500) circle (0.028);
\fill[vtx] (0.500,1.000) circle (0.028);
\fill[vtx] (0.500,1.500) circle (0.028);
\fill[vtx] (0.500,2.000) circle (0.028);
\fill[vtx] (0.500,2.500) circle (0.028);
\fill[vtx] (0.500,3.000) circle (0.028);
\fill[vtx] (0.500,3.500) circle (0.028);
\fill[vtx] (0.500,4.000) circle (0.028);
\fill[vtx] (0.500,4.500) circle (0.028);
\fill[vtx] (0.750,0.250) circle (0.028);
\fill[vtx] (0.750,0.750) circle (0.028);
\fill[vtx] (0.750,1.250) circle (0.028);
\fill[vtx] (0.750,1.750) circle (0.028);
\fill[vtx] (0.750,2.250) circle (0.028);
\fill[vtx] (0.750,2.750) circle (0.028);
\fill[vtx] (0.750,3.250) circle (0.028);
\fill[vtx] (0.750,3.750) circle (0.028);
\fill[vtx] (0.750,4.250) circle (0.028);
\fill[vtx] (0.750,4.750) circle (0.028);
\fill[vtx] (1.000,0.500) circle (0.028);
\fill[vtx] (1.000,1.000) circle (0.028);
\fill[vtx] (1.000,1.500) circle (0.028);
\fill[vtx] (1.000,2.000) circle (0.028);
\fill[vtx] (1.000,2.500) circle (0.028);
\fill[vtx] (1.000,3.000) circle (0.028);
\fill[vtx] (1.000,3.500) circle (0.028);
\fill[vtx] (1.000,4.000) circle (0.028);
\fill[vtx] (1.000,4.500) circle (0.028);
\fill[vtx] (1.250,0.250) circle (0.028);
\fill[vtx] (1.250,0.750) circle (0.028);
\fill[vtx] (1.250,1.250) circle (0.028);
\fill[vtx] (1.250,1.750) circle (0.028);
\fill[vtx] (1.250,2.250) circle (0.028);
\fill[vtx] (1.250,2.750) circle (0.028);
\fill[vtx] (1.250,3.250) circle (0.028);
\fill[vtx] (1.250,3.750) circle (0.028);
\fill[vtx] (1.250,4.250) circle (0.028);
\fill[vtx] (1.250,4.750) circle (0.028);
\fill[vtx] (1.500,0.500) circle (0.028);
\fill[vtx] (1.500,1.000) circle (0.028);
\fill[vtx] (1.500,1.500) circle (0.028);
\fill[vtx] (1.500,2.000) circle (0.028);
\fill[vtx] (1.500,2.500) circle (0.028);
\fill[vtx] (1.500,3.000) circle (0.028);
\fill[vtx] (1.500,3.500) circle (0.028);
\fill[vtx] (1.500,4.000) circle (0.028);
\fill[vtx] (1.500,4.500) circle (0.028);
\fill[vtx] (1.750,0.250) circle (0.028);
\fill[vtx] (1.750,0.750) circle (0.028);
\fill[vtx] (1.750,1.250) circle (0.028);
\fill[vtx] (1.750,1.750) circle (0.028);
\fill[vtx] (1.750,2.250) circle (0.028);
\fill[vtx] (1.750,2.750) circle (0.028);
\fill[vtx] (1.750,3.250) circle (0.028);
\fill[vtx] (1.750,3.750) circle (0.028);
\fill[vtx] (1.750,4.250) circle (0.028);
\fill[vtx] (1.750,4.750) circle (0.028);
\fill[vtx] (2.000,0.500) circle (0.028);
\fill[vtx] (2.000,1.000) circle (0.028);
\fill[vtx] (2.000,1.500) circle (0.028);
\fill[vtx] (2.000,2.000) circle (0.028);
\fill[vtx] (2.000,2.500) circle (0.028);
\fill[vtx] (2.000,3.000) circle (0.028);
\fill[vtx] (2.000,3.500) circle (0.028);
\fill[vtx] (2.000,4.000) circle (0.028);
\fill[vtx] (2.000,4.500) circle (0.028);
\fill[vtx] (2.250,0.250) circle (0.028);
\fill[vtx] (2.250,0.750) circle (0.028);
\fill[vtx] (2.250,1.250) circle (0.028);
\fill[vtx] (2.250,1.750) circle (0.028);
\fill[vtx] (2.250,2.250) circle (0.028);
\fill[vtx] (2.250,2.750) circle (0.028);
\fill[vtx] (2.250,3.250) circle (0.028);
\fill[vtx] (2.250,3.750) circle (0.028);
\fill[vtx] (2.250,4.250) circle (0.028);
\fill[vtx] (2.250,4.750) circle (0.028);
\fill[vtx] (2.500,0.500) circle (0.028);
\fill[vtx] (2.500,1.000) circle (0.028);
\fill[vtx] (2.500,1.500) circle (0.028);
\fill[vtx] (2.500,2.000) circle (0.028);
\fill[vtx] (2.500,2.500) circle (0.028);
\fill[vtx] (2.500,3.000) circle (0.028);
\fill[vtx] (2.500,3.500) circle (0.028);
\fill[vtx] (2.500,4.000) circle (0.028);
\fill[vtx] (2.500,4.500) circle (0.028);
\fill[vtx] (2.750,0.250) circle (0.028);
\fill[vtx] (2.750,0.750) circle (0.028);
\fill[vtx] (2.750,1.250) circle (0.028);
\fill[vtx] (2.750,1.750) circle (0.028);
\fill[vtx] (2.750,2.250) circle (0.028);
\fill[vtx] (2.750,2.750) circle (0.028);
\fill[vtx] (2.750,3.250) circle (0.028);
\fill[vtx] (2.750,3.750) circle (0.028);
\fill[vtx] (2.750,4.250) circle (0.028);
\fill[vtx] (2.750,4.750) circle (0.028);
\fill[vtx] (3.000,0.500) circle (0.028);
\fill[vtx] (3.000,1.000) circle (0.028);
\fill[vtx] (3.000,1.500) circle (0.028);
\fill[vtx] (3.000,2.000) circle (0.028);
\fill[vtx] (3.000,2.500) circle (0.028);
\fill[vtx] (3.000,3.000) circle (0.028);
\fill[vtx] (3.000,3.500) circle (0.028);
\fill[vtx] (3.000,4.000) circle (0.028);
\fill[vtx] (3.000,4.500) circle (0.028);
\fill[vtx] (3.250,0.250) circle (0.028);
\fill[vtx] (3.250,0.750) circle (0.028);
\fill[vtx] (3.250,1.250) circle (0.028);
\fill[vtx] (3.250,1.750) circle (0.028);
\fill[vtx] (3.250,2.250) circle (0.028);
\fill[vtx] (3.250,2.750) circle (0.028);
\fill[vtx] (3.250,3.250) circle (0.028);
\fill[vtx] (3.250,3.750) circle (0.028);
\fill[vtx] (3.250,4.250) circle (0.028);
\fill[vtx] (3.250,4.750) circle (0.028);
\fill[vtx] (3.500,0.500) circle (0.028);
\fill[vtx] (3.500,1.000) circle (0.028);
\fill[vtx] (3.500,1.500) circle (0.028);
\fill[vtx] (3.500,2.000) circle (0.028);
\fill[vtx] (3.500,2.500) circle (0.028);
\fill[vtx] (3.500,3.000) circle (0.028);
\fill[vtx] (3.500,3.500) circle (0.028);
\fill[vtx] (3.500,4.000) circle (0.028);
\fill[vtx] (3.500,4.500) circle (0.028);
\fill[vtx] (3.750,0.250) circle (0.028);
\fill[vtx] (3.750,0.750) circle (0.028);
\fill[vtx] (3.750,1.250) circle (0.028);
\fill[vtx] (3.750,1.750) circle (0.028);
\fill[vtx] (3.750,2.250) circle (0.028);
\fill[vtx] (3.750,2.750) circle (0.028);
\fill[vtx] (3.750,3.250) circle (0.028);
\fill[vtx] (3.750,3.750) circle (0.028);
\fill[vtx] (3.750,4.250) circle (0.028);
\fill[vtx] (3.750,4.750) circle (0.028);
\fill[vtx] (4.000,0.500) circle (0.028);
\fill[vtx] (4.000,1.000) circle (0.028);
\fill[vtx] (4.000,1.500) circle (0.028);
\fill[vtx] (4.000,2.000) circle (0.028);
\fill[vtx] (4.000,2.500) circle (0.028);
\fill[vtx] (4.000,3.000) circle (0.028);
\fill[vtx] (4.000,3.500) circle (0.028);
\fill[vtx] (4.000,4.000) circle (0.028);
\fill[vtx] (4.000,4.500) circle (0.028);
\fill[vtx] (4.250,0.250) circle (0.028);
\fill[vtx] (4.250,0.750) circle (0.028);
\fill[vtx] (4.250,1.250) circle (0.028);
\fill[vtx] (4.250,1.750) circle (0.028);
\fill[vtx] (4.250,2.250) circle (0.028);
\fill[vtx] (4.250,2.750) circle (0.028);
\fill[vtx] (4.250,3.250) circle (0.028);
\fill[vtx] (4.250,3.750) circle (0.028);
\fill[vtx] (4.250,4.250) circle (0.028);
\fill[vtx] (4.250,4.750) circle (0.028);
\fill[vtx] (4.500,0.500) circle (0.028);
\fill[vtx] (4.500,1.000) circle (0.028);
\fill[vtx] (4.500,1.500) circle (0.028);
\fill[vtx] (4.500,2.000) circle (0.028);
\fill[vtx] (4.500,2.500) circle (0.028);
\fill[vtx] (4.500,3.000) circle (0.028);
\fill[vtx] (4.500,3.500) circle (0.028);
\fill[vtx] (4.500,4.000) circle (0.028);
\fill[vtx] (4.500,4.500) circle (0.028);
\fill[vtx] (4.750,0.250) circle (0.028);
\fill[vtx] (4.750,0.750) circle (0.028);
\fill[vtx] (4.750,1.250) circle (0.028);
\fill[vtx] (4.750,1.750) circle (0.028);
\fill[vtx] (4.750,2.250) circle (0.028);
\fill[vtx] (4.750,2.750) circle (0.028);
\fill[vtx] (4.750,3.250) circle (0.028);
\fill[vtx] (4.750,3.750) circle (0.028);
\fill[vtx] (4.750,4.250) circle (0.028);
\fill[vtx] (4.750,4.750) circle (0.028);
\fill[codec] (1.500,3.000) circle (0.075);
\fill[codec] (3.000,1.000) circle (0.075);
\fill[codec] (4.250,2.250) circle (0.075);
\fill[codec] (1.000,4.500) circle (0.075);
\fill[codec] (2.000,1.500) circle (0.075);
\fill[codec] (3.250,2.750) circle (0.075);
\fill[codec] (2.750,4.250) circle (0.075);
\fill[codec] (0.500,3.500) circle (0.075);
\fill[codec] (3.750,1.250) circle (0.075);
\fill[codec] (4.500,4.000) circle (0.075);
\fill[codec] (1.750,4.750) circle (0.075);
\fill[codec] (1.000,2.000) circle (0.075);
\fill[codec] (3.500,4.500) circle (0.075);
\fill[codec] (1.250,3.750) circle (0.075);
\fill[codec] (0.500,1.000) circle (0.075);
\fill[codec] (2.750,1.750) circle (0.075);
\fill[codec] (2.250,3.250) circle (0.075);
\fill[codec] (0.000,2.500) circle (0.075);
\fill[codec] (4.000,3.000) circle (0.075);
\fill[codec] (0.250,4.250) circle (0.075);
\fill[codec] (1.750,2.250) circle (0.075);
\fill[codec] (4.500,1.500) circle (0.075);
\fill[codec] (3.000,3.500) circle (0.075);
\fill[codec] (0.750,2.750) circle (0.075);
\fill[codec] (3.500,2.000) circle (0.075);
\fill[codec] (1.250,1.250) circle (0.075);
\fill[codec] (4.250,4.750) circle (0.075);
\fill[codec] (2.000,4.000) circle (0.075);
\fill[codec] (2.500,2.500) circle (0.075);
\fill[codec] (0.250,1.750) circle (0.075);
\fill[codec] (3.750,3.750) circle (0.075);
\draw[delc,thick] (0.750,0.250) circle (0.085);
\draw[delc,thick] (1.500,0.500) circle (0.085);
\draw[delc,thick] (3.250,0.250) circle (0.085);
\draw[delc,thick] (2.250,0.750) circle (0.085);
\draw[delc,thick] (4.000,0.500) circle (0.085);
\fill[newc] (0.250,0.250) circle (0.085);
\fill[newc] (1.250,0.250) circle (0.085);
\fill[newc] (2.750,0.250) circle (0.085);
\fill[newc] (3.750,0.250) circle (0.085);
\fill[newc] (2.000,0.500) circle (0.085);
\fill[newc] (4.500,0.500) circle (0.085);
\draw[seam,dashed,thick] (-0.125,0)--(-0.125,5.000);
\draw[seam,dashed,thick] (4.625,0)--(4.625,5.000);
\draw[->] (0,-0.350)--(5.100,-0.350) node[right]{\scriptsize $s$};
\draw[->] (-0.550,0)--(-0.550,5.100) node[above]{\scriptsize $g$};
\draw[newc,rounded corners] (-0.350,0.100) rectangle (4.750,0.875);
\end{tikzpicture}
\caption{The set $S^+$ for $n=20$ in the coordinates $(s,g)$: in blue the points of $R$, in orange the points of $B$ (inside the box), and in red the points of $S_0$ in the rows $g\le 3$, which are discarded. The dashed lines are the two sides of the seam.}
\label{fig:S20}
\end{figure}

\begin{lemma}\label{lem:pluscount}
$|S^+|=n(n-2)/10+1$.
\end{lemma}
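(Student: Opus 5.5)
The count splits along the partition $S^+=R\cup B$. The parts are disjoint, since $R$ lies in rows $g\ge 4$ and $B$ in rows $g\le 2$. So $|S^+|=|R|+|B|=|S_0|-N_{\le 3}+|B|$, where $N_{\le 3}$ is the number of points of $S_0$ in the rows $g=1,2,3$. By Corollary~\ref{cor:count}, $|S_0|=n(n-2)/10$ when $n\equiv 0,2\pmod{10}$. It therefore suffices to show that $|B|=N_{\le 3}+1$.

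The points counted by $N_{\le 3}$ come from \eqref{eq:rows}. In row $g$, the points of $S_0$ are the $s\in[0,n-2]$ with $s\equiv\beta+3(g-1)\pmod{10}$. For the three rows this gives
\[
s\equiv\beta,\quad s\equiv\beta+3,\quad s\equiv\beta+6\pmod{10}.
\]
The interval $[0,n-2]$ has $n-1$ consecutive integers. When $n=10k$ it consists of $k$ full blocks of $10$ minus the residue $9$. When $n=10k+2$ it consists of $k$ full blocks plus the residue $0$.

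Each residue class therefore contributes $k$ points, with the following corrections:
\begin{itemize}
\item[] For $n\equiv 0$ ($\beta=3$), the residues are $3,6,9$, and residue $9$ is missing, so $N_{\le 3}=3k-1$.
\item[] For $n\equiv 2$ ($\beta=1$), the residues are $1,4,7$, and none equals $0$, so $N_{\le 3}=3k$.
\end{itemize}

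Next I count $|B|$ in the same way. Row $1$ takes $s\in[-1,n-2]$ with $s\equiv\beta+2$ or $\beta+8$. Row $2$ takes $s\in[0,n-2]$ with $s\equiv\beta+5$.
\begin{itemize}
\item[] For $\beta=3$: the row-$1$ residues are $5$ and $1$. On $[-1,n-2]$, which is $[0,n-2]$ plus $s=-1\equiv 9$, each contributes $k$, giving $2k$. The row-$2$ residue is $8$, contributing $k$. Hence $|B|=3k=N_{\le 3}+1$.
\item[] For $\beta=1$: the row-$1$ residues are $3$ and $9$. Residue $3$ gives $k$. Residue $9$ gives $k$ from $[0,n-2]$ (the extra residue there is $0$, not $9$) plus the point $s=-1$, so $k+1$. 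The row-$2$ residue is $6$, giving $k$. Hence $|B|=3k+1=N_{\le 3}+1$.
\end{itemize}

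This can be cross-checked against Example~\ref{ex:20}: for $n=20$ we have $k=2$, so $N_{\le 3}=5$ and $|B|=6$, as stated there.

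The only delicate step is the residue bookkeeping at the ends of the intervals. This means handling the missing residue $9$ in $[0,n-2]$ when $n\equiv 0$, the extra residue $0$ when $n\equiv 2$, and the extra point $s=-1$ in row $1$. I would present these in a small case table in the style of Table~\ref{tab:E}. With that settled, the lemma follows from Corollary~\ref{cor:count}.
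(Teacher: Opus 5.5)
Your proof is correct and matches the paper's own argument. You reduce via Corollary~\ref{cor:count} to showing $|B|=|S_0\setminus R|+1$, then count residue classes modulo $10$ from \eqref{eq:rows} in the windows $[0,n-2]$ and $[-1,n-2]$, getting $3k-1$ versus $3k$ for $n\equiv 0$ and $3k$ versus $3k+1$ for $n\equiv 2\pmod{10}$.
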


\begin{proof}
By Corollary~\ref{cor:count} it suffices to show that $|B|=|S_0\setminus R|+1$. We count points in intervals with a prescribed residue modulo $10$, using \eqref{eq:rows}. If $n=10q$, then $\beta=3$ and the window is $0\le s\le 10q-2$: the rows $g=1,2,3$ of $S_0$ contain the points with $s\equiv 3,6,9$, that is, $q$, $q$ and $q-1$ points, while $B$ contains $q+q$ points in row $1$ (residues $5$ and $1$ in $[-1,10q-2]$) and $q$ points in row $2$ (residue $8$). Thus $|S_0\setminus R|=3q-1$ and $|B|=3q$. If $n=10q+2$, then $\beta=1$ and the window is $0\le s\le 10q$: the rows $g=1,2,3$ of $S_0$ contain $q$ points each (residues $1,4,7$), while $B$ contains $q$ points with $s\equiv 3$ and $q+1$ points with $s\equiv 9$ in row $1$ (among the latter, $s=-1$), and $q$ points in row $2$ (residue $6$). Thus $|S_0\setminus R|=3q$ and $|B|=3q+1$.
\end{proof}

\begin{lemma}\label{lem:pluspacking}
$S^+$ is a packing set of $F_2(C_n)$.
\end{lemma}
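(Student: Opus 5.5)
We follow the scheme of Lemma~\ref{lem:packing}. By Lemma~\ref{lem:model}(ii), it suffices to show that $\|x-\sigma^k(y)\|\ge 3$ for all distinct $x,y\in S^+$ and all $k\in\Z$. Every point of $S^+$ has $-1\le s\le n-2$. Hence for $|k|\ge 2$ we get $|s(x)-s(\sigma^k y)|\ge 2n-(n-1)\ge 3$, and by the symmetry $\|x-\sigma^{-1}y\|=\|y-\sigma x\|$ only $k=0$ and $k=1$ (for ordered pairs) remain. We split the pairs according to whether $x,y$ lie in $R$ or in $B$.

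For $k=0$ we argue as follows.
\begin{itemize}
\item Pairs inside $R$ are handled by Lemma~\ref{lem:code}, since $R\subseteq L_{2n}$.
\item A point of $B$ has $g\le 2$ and a point of $R$ has $g\ge 4$. So $|\Delta g|\ge 2$, and equality only occurs for a point $(s,2)\in B$ against a point $(s',4)\in R$. By \eqref{eq:rows}, $s\equiv\beta+5$ and $s'\equiv\beta+9\pmod{10}$, so $|s-s'|\ge 4$ (since $s-s'\equiv 6\pmod{10}$).
\item Within $B$: points of row $1$ have residues $\beta+2,\beta+8$, which differ by $4$ or $6$ modulo $10$, so same-row distances are at least $4$. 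Row $1$ against row $2$ gives $s$-differences $\equiv \pm3\pmod{10}$, hence at least $3$. Row $2$ points are $10$ apart.
\end{itemize}

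For $k=1$ we use $s(\sigma y)-s(x)=n+s(y)-s(x)$. This is at most $2$ only when $s(y)$ is small and $s(x)$ is close to $n-2$. Since the extremes are $s\ge -1$ and $s\le n-2$, only the configurations with $s(y)\in\{-1,0,1\}$ and $s(x)\ge n-3+s(y)+1$ matter. Moreover $g(\sigma y)=n-g(y)$. The cases are:
\begin{itemize}
\item $x,y\in R$: this is exactly the computation in Lemma~\ref{lem:packing} with $\varepsilon=0$, giving $|g(x)-g(\sigma y)|\ge 4$.
\item $y\in B$: then $g(\sigma y)\ge n-2$. So a collision requires $g(x)\ge n-4$ with $x$ in the last two or three columns. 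We list the residues of $R$-points there via \eqref{eq:rows}, in each case $n\equiv 0$ ($\beta=3$) and $n\equiv 2$ ($\beta=1$), and check directly. For instance, the added point $(-1,1)$ has $\sigma(-1,1)=(n-1,n-1)$, and we must check that no $R$-point lies in columns $n-3,n-2$ near the top.
\item $x\in B$ (small $g$) and $y\in R$: a collision needs $g(y)\ge n-4$ with $y$ in columns $0,1$.
\item $x,y\in B$: impossible, since $g(x)\le 2$ while $g(\sigma y)\ge n-2\ge 8$.
\end{itemize}

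The main obstacle is this finite seam check. For each of the two residue classes of $n$, we identify which lattice points of $L_{2n}$ with $g\ge 4$ lie in columns $0,1,n-3,n-2$ and rows near $n-1$. Using $s\equiv\beta+3(g-1)$ with $g$ near $n$, this becomes a residue computation modulo $10$, which we tabulate. The row shift by $2$ in the construction of $B$ is precisely designed to avoid these points, so the table should confirm all distances are at least $3$. It is worth sanity-checking the table against Example~\ref{ex:20}, e.g. that $\{0,1\}$ is at distance at least $3$ from $\{18,19\}$, $\{0,15\}$ and $\{3,19\}$.
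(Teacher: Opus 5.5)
Your overall scheme is the paper's. The cases $k=0$ and $|k|\ge 2$ are correct and handled exactly as in the paper, and so are the $k=1$ pairs with both points in $R$ (via Lemma~\ref{lem:packing}) or both in $B$. The gap is the remaining $k=1$ case, with one point in $B$ and one in $R$. This is the only place where the particular choice of $B$ matters, so it is the actual content of the lemma. You do not carry it out: you describe what a table would contain and say it ``should confirm'' the distances. As written, nothing rules out a point of $R$ within distance $2$ of $\sigma(-1,1)=(n-1,n-1)$, or a point of $\sigma(R)$ close to a boundary point of $B$ near column $n-2$.

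The check is short once you sharpen the column condition.
\begin{itemize}
\item Since $s(x)\le n-2$, the inequality $n+s(y)-s(x)\le 2$ forces $s(y)\in\{-1,0\}$ and $s(x)\ge n-2+s(y)$. In particular your column $1$ case is vacuous.
\item From the residues defining $B$, the only point of $B$ with $s\le 0$ is $(-1,1)$, and it occurs only when $n\equiv 2\pmod{10}$.
\item The only points of $B$ with $s\ge n-3$ are $(n-2,2)$ when $n\equiv 0$ and $(n-3,1)$ when $n\equiv 2\pmod{10}$.
\item By \eqref{eq:rows}, the points of $R$ in columns $0$, $n-3$, $n-2$ have $g\equiv 2n$, $9-n$, $6-n\pmod{10}$, respectively.
\end{itemize}
For $n\equiv 0$, the only mixed pair is therefore $x=(n-2,2)$ and $y=(0,g)\in R$ with $g\equiv 0$. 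Then $g\le n-10$, so $g(x)-g(\sigma y)=2+g-n\le -8$.

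For $n\equiv 2$, the only mixed pairs have $y=(-1,1)$ and $x\in R$ in column $n-3$ (where $g\equiv 7$) or column $n-2$ (where $g\equiv 4$). Then $g(x)-(n-1)\equiv 6$ or $3\pmod{10}$, so $|g(x)-g(\sigma y)|\ge 3$.

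With this computation inserted, your argument is complete and coincides with the paper's proof.
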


\begin{proof}
By Lemma~\ref{lem:model}(ii) it suffices to show that $\|x-\sigma^k(y)\|\ge 3$ for all distinct $x,y\in S^+$ and all $k\in\Z$.

\emph{Case $k=0$.} Two points of $R\subseteq L_{2n}$ are at distance at least $3$ by Lemma~\ref{lem:code}. For two points of $B$, the difference of their $s$-coordinates is congruent modulo $10$ to $0$ or $\pm 6$ if both lie in row $1$, to $\pm3$ if they lie in different rows, and to $0$ if both lie in row $2$; in all cases its absolute value is at least $3$. Finally, let $x\in R$ and $y\in B$. If $g(x)\ge 5$, or if $g(y)=1$, then $|g(x)-g(y)|\ge 3$. Otherwise $g(x)=4$ and $g(y)=2$, and by \eqref{eq:rows} we have $s(x)\equiv\beta+9$ and $s(y)\equiv\beta+5\pmod{10}$, so $|s(x)-s(y)|\ge 4$.

\emph{Case $|k|\ge 2$.} All points of $S^+$ satisfy $-1\le s\le n-2$, hence $|s(x)-s(\sigma^k y)|\ge 2n-(n-1)>2$.

\emph{Case $k=\pm1$.} As before it suffices to take $k=1$ for every ordered pair $(x,y)$. Now $s(\sigma y)-s(x)=n+s(y)-s(x)$ is at most $2$ only if
\[
(s(y),s(x))\in\{(0,n-2),\ (-1,n-3),\ (-1,n-2)\}.
\]
If $x,y\in R\subseteq S_0$ we are done by Lemma~\ref{lem:packing}, so we may assume that $x$ or $y$ lies in $B$. By \eqref{eq:rows} and the definition of $B$, the points of $R$ with $s=0$, $s=n-3$ and $s=n-2$ have $g\equiv 2n$, $g\equiv 9-n$ and $g\equiv 6-n\pmod{10}$, respectively, and the points of $B$ with $s\in\{-1,0,n-3,n-2\}$ are: $(n-2,2)$ if $n\equiv 0\pmod{10}$; $(-1,1)$ and $(n-3,1)$ if $n\equiv 2\pmod{10}$. Put $\delta=g(x)-g(\sigma y)=g(x)+g(y)-n$.

If $n\equiv 0\pmod{10}$, the only possibility is $x=(n-2,2)$ and $y=(0,g)\in R$ with $g\equiv 0\pmod{10}$; then $g\le n-10$ and $\delta=2+g-n\le -8$.

If $n\equiv 2\pmod{10}$, then $y=(-1,1)$, and $x$ is $(n-3,1)$ or a point of $R$ with $s(x)\in\{n-3,n-2\}$. If $x=(n-3,1)$, then $\delta=2-n\le -10$. If $x=(n-3,g)\in R$, then $g\equiv 9-n\equiv 7$, so $\delta=g+1-n\equiv 6\pmod{10}$ and $|\delta|\ge 4$. If $x=(n-2,g)\in R$, then $g\equiv 6-n\equiv 4$, so $\delta\equiv 3\pmod{10}$ and $|\delta|\ge 3$.

In all cases $\|x-\sigma(y)\|\ge|\delta|\ge 3$.
\end{proof}

\begin{proof}[Proof of Theorem~\ref{maintheorem}]
If $n\not\equiv 0,2\pmod{10}$, the theorem was proved at the end of Section~\ref{sec:S}. If $n\equiv 0,2\pmod{10}$, then $n\ge 10$ and the theorem follows from Lemmas~\ref{lem:pluscount} and~\ref{lem:pluspacking}.
\end{proof}

\section*{Acknowledgements}
It is a pleasure to thank J.~M.~Gómez Soto and L.~M.~Ríos-Castro. This note would not exist without their paper \cite{GR}: after some detailed reading of his paper, I was aware that the construction studied here is essentially theirs.\\

\section*{Use of generative AI}
The author's idea for a simplified proof was refined through a series of interactions with the AI assistant Claude (Anthropic). The AI was also used to revise the exposition and to write the code for the computer-assisted verifications. The author independently verified all mathematical arguments, computations, and references, and takes full responsibility for the contents of this note.


\end{document}